\theoremstyle{plain}
\newtheorem{theorem}{Theorem}
\newtheorem{lemma}{Lemma}
\newtheorem{corollary}{Corollary}
\newtheorem{proposition}{Proposition}
\theoremstyle{definition}
\theoremstyle{remark}
\newtheorem{remark}{Remark}
\numberwithin{equation}{section}
\begin{document}
\title[$q$-Hermite Polynomials]{On the $q-$Hermite polynomials and their
relationship with some other families of orthogonal polynomials}
\author{Pawe\l\ J. Szab\l owski}
\address{Department of Mathematics and Information Sciences, \\
Warsaw University of Technology\\
ul. Koszykowa 75, 00-662 Warsaw, Poland}
\email{pawel.szablowski@gmail.com}
\date{December 8, 2010}
\subjclass[2010]{Primary 33D45, 05A30; Secondary 42C05}
\keywords{$q-$Hermite, Al-Salam-Chihara, Rogers-Szeg\"{o}, Chebyshev
polynomials, $q-$ultraspherical polynomials, summing kernels, generalization
of Poisson-Mehler kernel, generalization of Kibble-Slepian formula.}

\begin{abstract}
We review properties of the $q-$Hermite polynomials and indicate their links
with the Chebyshev, Rogers--Szeg\"{o}, Al-Salam--Chihara, continuous $q-$%
utraspherical polynomials. In particular we recall the connection
coefficients between these families of polynomials. We also present some
useful and important finite and infinite expansions involving polynomials of
these families including symmetric and non-symmetric kernels. In the paper
we collect scattered throughout literature useful but not widely known facts
concerning these polynomials. It is based on 43 positions of predominantly
recent literature.
\end{abstract}

\maketitle

\section{Introduction}

The aim of this paper is to review basic properties of the $q-$Hermite
polynomials and collect their not always widely known properties scattered
throughout the recent literature. The $q-$Hermite polynomials constitute a $%
1-$parameter family of orthogonal polynomials that for $q\allowbreak
=\allowbreak 1$ are equal to the well known Hermite polynomials, more
precisely the probabilistic Hermite polynomials i.e. orthogonal with respect
to the density of $N(0,1)$ distributions ($\exp (-x^{2}/2)/\sqrt{2\pi }$).
For $q\allowbreak =\allowbreak 0$ they are equal to the re-scaled Chebyshev
polynomials of the second kind, again more precisely, polynomials orthogonal
with respect to the Wigner measure i.e. the one with the density $2\sqrt{%
4-x^{2}}/\pi .$ On the other hand these polynomials are related to the so
called Rogers-Szeg\"{o} polynomials or Rogers (continuous $q-$%
ultraspherical) polynomials and other important families of polynomials such
as the Al-Salam--Chihara polynomials.

Why these polynomials are important? For one thing they are very simple and
as it will be shown in the sequel many more complicated (i.e. having more
parameters) families of orthogonal polynomials can be expressed as linear
combinations of the $q-$Hermite polynomials. Secondly since they are simple
many facts concerning them are known. Thirdly during last 15 years they
appeared within several interesting applications that came from the theories
quite distant from the classical $q-$series theory or combinatorics.

They appeared long time ago by the end of XIX-th century as a version of the
Rogers polynomials (see \cite{Rogers1}, \cite{Rogers2}, \cite{Rogers3}),
their important properties were examined by Szeg\"{o} \cite{Szego1} and
Carlitz \cite{Carlitz56}, \cite{Carlitz57}, \cite{Carlitz72} through XX-th
century, but only recently it appeared that they are important in
non-commutative probability (see e.g. \cite{Bo}, \cite{Voi2},\cite{Voi}),
quantum physics (see e.g. \cite{Flo97} \cite{Floreanini97}), combinatorics
(see e.g. \cite{ISV87}, \cite{Stanton08}, \cite{IS03}) and last but not
least ordinary, classical probability theory (see e.g. \cite{bryc1}, \cite%
{bms}, \cite{BryMaWe}, \cite{BryWes10}, \cite{matszab}, \cite{matszab2})
extending the spectrum of known, finite support measures.

To define these polynomials and briefly describe their properties one has to
adopt notation used in the so called $q$-series theory. Moreover the
terminology concerning these polynomials is not fixed and under the same
name appear sometimes different, but related to one another families of
polynomials. Thus one has to be aware of these differences.

That is why the next section of the paper will be devoted to notation,
definitions and discussion of different families of polynomials that
function under the same name. The following section will be dedicated to the
different 'finite expansions' formulae establishing relationships between
these families of polynomials including listing known the so called
'connection coefficients' and 'linearization' formulae. The last section is
dedicated to some infinite expansions involving discussed polynomials. It
consists of three subsections the first of which is devoted to different
generalizations of the Mehler expansion formula, the second one to some
useful infinite expansions including reciprocals of some kernels that have
auxiliary meaning. Finally the third subsection is dedicated to an attempt
of generalization of the $3$- dimensional Kibble--Slepian formula with the
Hermite polynomials replaced by the $q-$Hermite ones..

\section{Notation and definitions}

$q$ is a parameter. We will assume that $-1<q\leq 1$ unless otherwise
stated. The case $q\allowbreak =\allowbreak 1$ may not always be considered
directly but sometimes as left hand side limit ( i.e. $q\longrightarrow
1^{-} $). We will point out these cases.

We will use traditional notation of the $q-$series theory i.e. $\left[ 0%
\right] _{q}\allowbreak =\allowbreak 0;$ $\left[ n\right] _{q}\allowbreak
=\allowbreak 1+q+\ldots +q^{n-1}\allowbreak ,$ $\left[ n\right]
_{q}!\allowbreak =\allowbreak \prod_{j=1}^{n}\left[ j\right] _{q},$ with $%
\left[ 0\right] _{q}!\allowbreak =1,$%
\begin{equation*}
\QATOPD[ ] {n}{k}_{q}\allowbreak =\allowbreak \left\{ 
\begin{array}{ccc}
\frac{\left[ n\right] _{q}!}{\left[ n-k\right] _{q}!\left[ k\right] _{q}!} & 
, & n\geq k\geq 0 \\ 
0 & , & otherwise%
\end{array}%
\right. .
\end{equation*}%
$\binom{n}{k}$ will denote ordinary, well known binomial coefficient. 
\newline
It is useful to use the so called $q-$Pochhammer symbol for $n\geq 1:$%
\begin{equation*}
\left( a;q\right) _{n}=\prod_{j=0}^{n-1}\left( 1-aq^{j}\right) ,~~\left(
a_{1},a_{2},\ldots ,a_{k};q\right) _{n}\allowbreak =\allowbreak
\prod_{j=1}^{k}\left( a_{j};q\right) _{n}.
\end{equation*}%
with $\left( a;q\right) _{0}\allowbreak =\allowbreak 1$.

Often $\left( a;q\right) _{n}$ as well as $\left( a_{1},a_{2},\ldots
,a_{k}|q\right) _{n}$ will be abbreviated to $\left( a\right) _{n}$ and $%
\left( a_{1},a_{2},\ldots ,a_{k}\right) _{n},$ if it will not cause
misunderstanding.

It is easy to notice that $\left( q\right) _{n}=\left( 1-q\right) ^{n}\left[
n\right] _{q}!$ and that%
\begin{equation*}
\QATOPD[ ] {n}{k}_{q}\allowbreak =\allowbreak \allowbreak \left\{ 
\begin{array}{ccc}
\frac{\left( q\right) _{n}}{\left( q\right) _{n-k}\left( q\right) _{k}} & ,
& n\geq k\geq 0 \\ 
0 & , & otherwise%
\end{array}%
\right. .
\end{equation*}
\newline
The above mentioned formula is just an example where direct setting $%
q\allowbreak =\allowbreak 1$ is senseless however passage to the limit $%
q\longrightarrow 1^{-}$ makes sense.

Notice that in particular $\left[ n\right] _{1}\allowbreak =\allowbreak n,%
\left[ n\right] _{1}!\allowbreak =\allowbreak n!,$ $\QATOPD[ ] {n}{k}%
_{1}\allowbreak =\allowbreak \binom{n}{k},$ $(a)_{1}\allowbreak =\allowbreak
1-a,$ $\left( a;1\right) _{n}\allowbreak =\allowbreak \left( 1-a\right) ^{n}$
and $\left[ n\right] _{0}\allowbreak =\allowbreak \left\{ 
\begin{array}{ccc}
1 & if & n\geq 1 \\ 
0 & if & n=0%
\end{array}%
\right. ,$ $\left[ n\right] _{0}!\allowbreak =\allowbreak 1,$ $\QATOPD[ ] {n%
}{k}_{0}\allowbreak =\allowbreak 1,$ $\left( a;0\right) _{n}\allowbreak
=\allowbreak \left\{ 
\begin{array}{ccc}
1 & if & n=0 \\ 
1-a & if & n\geq 1%
\end{array}%
\right. .$

$i$ will denote imaginary unit, unless otherwise clearly stated.

In the sequel we shall also use the following useful notation:%
\begin{equation*}
S\left( q\right) =\left\{ 
\begin{array}{ccc}
\lbrack -\frac{2}{\sqrt{1-q}},\frac{2}{\sqrt{1-q}}] & if & \left\vert
q\right\vert <1 \\ 
\mathbb{R} & if & q=1%
\end{array}%
\right. .
\end{equation*}

Sometimes we will define sets of polynomials only on bounded intervals. Of
course they can be naturally extended to whole real line.

Basically each considered family of polynomials will be of one of two kinds.
The first 'kind' will be orthogonal on $S\left( q\right) $ and their names
will generally start with the capitals. The polynomials of the second 'kind'
will be orthogonal on $[-1,1]$ and their names will generally will start
with the lower case letters. There will be in fact 4 exceptions for the two
kinds of Chebyshev polynomials (traditionally denoted by $T$ and $U$)$,$ the
so called Rogers or continuous $q$-ultraspherical polynomials traditionally
denoted by $C$ and the Al-Salam--Chihara polynomials in its 'lower case
version' version traditionally denoted by $Q.$ These polynomials are
orthogonal on $[-1,1]$ but their names as mentioned before traditionally
start with the capital letter. The difference between those two kinds of
polynomials are minor. Besides the 'lower case letters' polynomials either
do not allow the case $q\allowbreak =\allowbreak 1$ or this case leads to
some trivialities. On the contrary for the 'upper case letters' polynomials
the case $q\allowbreak =\allowbreak 1$ either considered directly or
understood as a limit $q\longrightarrow 1^{-}$ leads to important
simplifications and supports intuition.

As of now it seems that the 'upper case' polynomials are more important in
the applications that appeared recently such as probability theory both
commutative and non-commutative or quantum physics, while the 'lower case'
polynomials are more typical in the special functions theory and the
combinatorics.

In brief description of certain functions, given by infinite products,
important for the discussed families of polynomials we will use the
following families of auxiliary polynomials of degree at most $2.$

In fact they are again of two different forms (as are the families of
polynomials) that are connected with the fact if considered polynomials are
orthogonal on $\left[ -1,1\right] $ regardless of $q$ or on $S\left(
q\right) .$ As the families of polynomials these auxiliary polynomials will
be denoted by the name starting with the capital if the case concerns
orthogonality on $S\left( q\right) .$

Hence we will consider for $k\geq 0:$%
\begin{eqnarray*}
v\left( x|a\right) &=&1-2ax+a^{2},~V_{q}(x|a)=1-(1-q)ax+(1-q)a^{2} \\
l\left( x|a\right) &=&(1+a)^{2}-4xa^{2},~L_{q}(x|a)=(1+a)^{2}-(1-q)x^{2}a^{2}
\\
w\left( x,y|t\right) &=&(1-t^{2})^{2}-4xyt(1+t^{2})+4t^{2}(x^{2}+y^{2}), \\
W_{q}(x,y|t) &=&(1-t^{2})^{2}-(1-q)xyt(1+t^{2})+(1-q)t^{2}(x^{2}+y^{2}).
\end{eqnarray*}

Let us notice that%
\begin{eqnarray*}
w\left( x,x|t\right) &=&(1-t)^{2}l\left( x|t\right) , \\
W_{q}\left( x,x|t\right) &=&(1-t)^{2}L_{q}\left( x|t\right) .
\end{eqnarray*}%
and that also%
\begin{eqnarray*}
(ae^{i\theta },ae^{-i\theta })_{1} &=&v\left( x|a\right) , \\
(ae^{i\theta +i\eta )},ae^{-i\theta +i\eta },ae^{i\theta -i\eta
},ae^{-i\theta -i\eta })_{1} &=&w\left( x,y|a\right) , \\
(ae^{i2\theta },ae^{-i2\theta })_{1} &=&l\left( x|a\right) ,
\end{eqnarray*}%
\begin{eqnarray}
\left( ae^{i\theta },ae^{-i\theta }\right) _{\infty } &=&\prod_{k=0}^{\infty
}v\left( x|aq^{k}\right) ,  \label{rozklv} \\
\left( te^{i\left( \theta +\phi \right) },te^{i\left( \theta -\phi \right)
},te^{-i\left( \theta -\phi \right) },te^{-i\left( \theta +\phi \right)
}\right) _{\infty } &=&\prod_{k=0}^{\infty }w\left( x,y|tq^{k}\right) ,
\label{rozklw} \\
\left( ae^{2i\theta },ae^{-2i\theta }\right) _{\infty }
&=&\prod_{k=0}^{\infty }l\left( x|aq^{k}\right) ,  \label{rozkll}
\end{eqnarray}%
where, as usually in the $q-$series theory, $x\allowbreak =\allowbreak \cos
\theta $ and $y=\cos \phi .$

The following convention will help in ordered listing of the properties of
the discussed families of polynomials. Namely the family of polynomials
whose names start with say a letter $A$ will be referred to as $A$
(similarly for the lower case $a)$. There will be one exception namely
members of the so called family of big $q$-Hermite polynomials are
traditionally denoted by letter $H$ (or $h)$ as members of the family of $q-$%
Hermite polynomials. So family of big $q-$Hermite polynomials will be
referred to by $bH.$

Let us also define the following sets of polynomials and present their
generating functions and measures with respect to which these polynomials
are orthogonal if these measures are positive.

\subsection{Hermite}

The Hermite polynomials are defined by the following $3-$term recurrence (%
\ref{_1}), below: 
\begin{equation}
xH_{n}\left( x\right) =H_{n+1}\left( x\right) +nH_{n-1},  \label{_1}
\end{equation}%
with $H_{0}\left( x\right) \allowbreak =\allowbreak H_{1}\left( x\right)
\allowbreak =\allowbreak 1$. They slightly differ from the Hermite
polynomials $h_{n}$ considered in most of the books on special functions.
Namely 
\begin{equation*}
2xh_{n}\left( x\right) =h_{n+1}(x)+2nh_{n-1}(x),
\end{equation*}%
with $h_{-1}\left( x\right) =0,$ $h_{0}\left( x\right) \allowbreak
=\allowbreak 1.$

It is known that polynomials $\left\{ h_{n}\right\} $ are orthogonal with
respect to $\exp \left( -x^{2}\right) $ while $\left\{ H_{n}\right\} $ with
respect to $\exp \left( -x^{2}/2\right) .$ Moreover $H_{n}\left( x\right)
=h_{n}\left( x/\sqrt{2}\right) /\left( \sqrt{2}\right) ^{n}.$ Besides we
have 
\begin{eqnarray}
\exp \left( xt-t^{2}/2\right) \allowbreak &=&\allowbreak \sum_{k\geq 0}\frac{%
t^{k}}{k!}H_{k}\left( x\right) ,  \label{_gH} \\
\exp \left( 2xt-t^{2}\right) \allowbreak &=&\allowbreak \sum_{k\geq 0}\frac{%
t^{k}}{k!}h_{k}\left( x\right) .  \label{_gh}
\end{eqnarray}

\subsection{Chebyshev}

They are of two kinds. The Chebyshev polynomials of the first kind $\left\{
T_{n}\right\} _{n\geq -1}$ are defined by the following $3-$term recursion%
\begin{equation}
2xT_{n}\left( x\right) \allowbreak =\allowbreak T_{n+1}\left( x\right)
+T_{n-1}\left( x\right) ,  \label{czeb}
\end{equation}%
for $n\geq 1,$ with $T_{0}\left( x\right) \allowbreak =\allowbreak 1,$ $%
T_{1}\left( x\right) \allowbreak =\allowbreak x.$ One can define them also
in the following way:%
\begin{equation}
T_{n}\left( \cos \theta \right) =\cos \left( n\theta \right) .  \label{repCh}
\end{equation}

The Chebyshev polynomials $\left\{ U_{n}\left( x\right) \right\} _{n\geq 0}$
of the second kind are defined by the same $3$-term recurrence i.e. (\ref%
{czeb}) with the different initial conditions, namely $U_{0}\left( x\right)
\allowbreak =\allowbreak 1$ and $U_{1}\left( x\right) \allowbreak
=\allowbreak 2x.$ One shows that they can be defined also by 
\begin{equation}
U_{n}\left( \cos \theta \right) \allowbreak =\allowbreak \frac{\sin \left(
n+1\right) \theta }{\sin \theta }.  \label{repCh2}
\end{equation}

We have 
\begin{eqnarray*}
\int_{-1}^{1}T_{n}\left( x\right) T_{m}\left( x\right) \frac{dx}{\pi \sqrt{%
1-x^{2}}} &=&\left\{ 
\begin{array}{ccc}
1 & if & m=n=0 \\ 
1/2 & if & m=n\neq 0 \\ 
0 & if & m\neq n%
\end{array}%
\right. , \\
\int_{-1}^{1}U_{n}\left( x\right) U_{m}\left( x\right) \frac{2\sqrt{1-x^{2}}%
}{\pi }dx &=&\left\{ 
\begin{array}{ccc}
1 & if & m=n \\ 
0 & if & m\neq n%
\end{array}%
\right. ,
\end{eqnarray*}%
and for $\left\vert t\right\vert \leq 1$ 
\begin{eqnarray}
\sum_{k=0}^{\infty }t^{k}T_{k}\left( x\right) &=&\frac{1-tx}{1-2tx+t^{2}},
\label{_gT} \\
\sum_{k=0}^{\infty }t^{k}U_{k}\left( x\right) &=&\frac{1}{1-2tx+t^{2}}.
\label{_gU}
\end{eqnarray}

\subsection{q-Hermite}

The $q-$Hermite polynomials are defined by: 
\begin{equation}
2xh_{n}(x|q)=h_{n+1}(x|q)+(1-q^{n})h_{n-1}(x|q),  \label{q-cont}
\end{equation}%
for $n\geq 1$ with $h_{-1}(x|q)=0,$ $h_{0}(x|q)=1$.

The polynomials $h_{n}$ are often called the continuous $q-$Hermite
polynomials. Since the terminology is not fixed and also since we will
consider only two types of them (defined by (\ref{q-cont}) and (\ref{He}))
we will use the name $q-$Hermite polynomials for the brevity.

In fact we will also use the following transformed form of the polynomials $%
h_{n},$ namely the polynomials: 
\begin{equation}
H_{n}\left( x|q\right) \allowbreak =\allowbreak (1-q)^{-n/2}h_{n}\left( 
\frac{x\sqrt{1-q}}{2}|q\right) .  \label{ch_of_vars}
\end{equation}%
It is easy to notice that the polynomials $\left\{ H_{n}\left( x|q\right)
\right\} $ satisfy the following $3-$term recurrence%
\begin{equation}
xH_{n}\left( x|q\right) =H_{n+1}\left( x|q\right) \allowbreak +\left[ n%
\right] _{q}H_{n-1}\left( x\right) ,  \label{He}
\end{equation}%
for $n\geq 1$ with $H_{-1}\left( x|q\right) \allowbreak =\allowbreak 0$, $%
H_{1}\left( x|q\right) \allowbreak =\allowbreak 1$. The name is justified
since one can easily show that $n\geq -1$ 
\begin{equation*}
H_{n}\left( x|1\right) \allowbreak =\allowbreak H_{n}\left( x\right) .
\end{equation*}%
Notice that since $\left[ n\right] _{0}\allowbreak =\allowbreak 1$ for $%
n\geq -1$ we have 
\begin{equation}
H_{n}\left( x|0\right) \allowbreak =\allowbreak U_{n}\left( x/2\right) .
\label{q=0}
\end{equation}

It is known that (see e.g. \cite{KLS}(14.26.2)): 
\begin{eqnarray*}
\int_{-1}^{1}h_{n}\left( x|q\right) h_{m}\left( x|q\right) f_{h}\left(
x|q\right) dx &=&\left\{ 
\begin{array}{ccc}
\left( q\right) _{n} & if & m=n \\ 
0 & if & m\neq n%
\end{array}%
\right. , \\
\int_{S\left( q\right) }H_{n}\left( x\right) H_{m}\left( x\right)
f_{N}\left( x\right) dx &=&\left\{ 
\begin{array}{ccc}
\left[ n\right] _{q}! & if & m=n \\ 
0 & if & m\neq n%
\end{array}%
\right. ,
\end{eqnarray*}%
where we denoted 
\begin{eqnarray}
f_{h}\left( x|q\right) &=&\frac{2\left( q\right) _{\infty }\sqrt{1-x^{2}}}{%
\pi }\prod_{k=1}^{\infty }l\left( x|q^{k}\right) ,  \label{fN} \\
f_{N}\left( x|q\right) \allowbreak &=&\allowbreak \left\{ 
\begin{array}{ccc}
\sqrt{1-q}f_{h}(x\sqrt{1-q}/2|q)/2 & if & \left\vert q\right\vert <1 \\ 
\exp \left( -x^{2}/2\right) /\sqrt{2\pi } & if & q=1%
\end{array}%
\right.
\end{eqnarray}%
and (see \cite{KLS}(14.26.11))%
\begin{eqnarray}
\sum_{j=0}^{\infty }\frac{t^{j}}{\left( q\right) _{j}}h_{j}\left( x|q\right)
&=&\frac{1}{\prod_{k=0}^{\infty }v\left( x|tq^{k}\right) },  \label{_gqh} \\
\sum_{j=0}^{\infty }\frac{t^{j}}{\left[ j\right] _{q}!}H_{j}\left(
x|q\right) &=&\frac{1}{\prod_{k=0}^{\infty }V_{q}\left( x|tq^{k}\right) }.
\label{_gqH}
\end{eqnarray}

Convergence is in the above formulae for $\left\vert x|,|t\right\vert \leq 1$
in (\ref{_gqh}) and $x\in S\left( q\right) $ and $\left\vert t\sqrt{1-q}%
\right\vert \leq 1$ in (\ref{_gqH}).

One proves also that 
\begin{eqnarray}
\lim_{q->1^{-}}f_{N}\left( x|q\right) &=&\frac{1}{\sqrt{2\pi }}\exp \left( -%
\frac{x^{2}}{2}\right) ,  \label{qGauss} \\
\lim_{q->1^{-}}\frac{1}{\prod_{k=0}^{\infty }V_{q}\left( x|tq^{k}\right) }
&=&\exp \left( xt-\frac{t^{2}}{2}\right) .  \label{fgH}
\end{eqnarray}%
Rigorous and easy proofs of these facts can be found in \cite{ISV87}. The
convergence in distribution is obvious since we have $\forall n\geq 0:$ $%
\lim_{q->1^{-}}H_{n}\left( x|q\right) \allowbreak =\allowbreak H_{n}\left(
x|1\right) $ consequently we have the convergence of moments.

Let us remark that the density $f_{N}$ is a real probabilistic density i.e.
integrates to $1.$ Since we have (\ref{qGauss}) it is sometimes called $q-$%
Gaussian or $q-$Normal. It has appeared in non-commutative probability
context in an important paper \cite{Bo}. Later in classical probability
context appeared in \cite{bryc1} and \cite{bms}. Its further properties
including algorithm how to simulate sequences if independent random
observations having $f_{N}$ as its density were presented in \cite%
{Szab-qGauss}.

One considers also the small generalization of the $q-$Hermite polynomials
namely the so called big continuous $q-$Hermite polynomials. i.e. the
polynomials defined by the following 3-term recurrence:%
\begin{eqnarray*}
(2x-aq^{n})h_{n}\left( x|a,q\right) &=&h_{n+1}\left( x|a,q\right)
+(1-q^{n})h_{n-1}\left( x|a,q\right) , \\
(x-aq^{n})H_{n}\left( x|a,q\right) &=&H_{n+1}\left( x|a,q\right) +\left[ n%
\right] _{q}H_{n-1}\left( x|a,q\right) ,
\end{eqnarray*}%
with initial conditions: $h_{-1}\left( x|a,q\right) \allowbreak =\allowbreak
H_{-1}\left( x|a,q\right) \allowbreak =\allowbreak 0$ and $h_{0}\left(
x|a,q\right) \allowbreak =\allowbreak H_{0}\left( x|a,q\right) \allowbreak
=\allowbreak 1.$ For the sake of brevity we will call them simply big $q-$%
Hermite polynomials. They are obviously inter-related by 
\begin{equation*}
H_{n}\left( x|a,q\right) \allowbreak =\allowbreak h_{n}\left( \frac{\sqrt{1-q%
}x}{2}|a\sqrt{1-q},q\right) /\left( 1-q\right) ^{n/2}.
\end{equation*}%
Notice that, using well known properties of the ordinary Hermite
polynomials, we have: 
\begin{equation*}
H_{n}\left( x|a,1\right) \allowbreak =\allowbreak H_{n}\left( x-a\right) .
\end{equation*}%
One can easily show (by calculating generating function and comparing it
with (\ref{gfha}), below and then applying (\ref{rozklv})) that%
\begin{equation*}
h_{n}\left( x|a,q\right) =\sum_{k=0}^{n}\QATOPD[ ] {n}{k}_{q}\left(
ae^{i\theta }\right) _{k}e^{i\left( n-2k\right) \theta },
\end{equation*}%
where as usually $x\allowbreak =\allowbreak \cos \theta .$

We have (see e.g. \cite{KLS}(14.18.13)) 
\begin{eqnarray}
\sum_{j=0}^{\infty }\frac{t^{j}}{\left( q\right) _{j}}h_{j}\left(
x|a,q\right) &=&\frac{\left( at\right) _{\infty }}{\prod_{k=0}^{\infty
}v\left( x|tq^{k}\right) },  \label{gfha} \\
\sum_{j=0}^{\infty }\frac{t^{j}}{\left[ j\right] _{q}!}H_{j}\left(
x|a,q\right) &=&\frac{\left( (1-q)at\right) _{\infty }}{\prod_{k=0}^{\infty
}V_{q}\left( x|tq^{k}\right) }.  \label{gfHa}
\end{eqnarray}%
We have also the following orthogonality relationships for $\left\vert
a\right\vert <1$ (again e.g. \cite{KLS}(14.18.2)) 
\begin{equation*}
\int_{S\left( q\right) }H_{n}\left( x|a,q\right) H_{m}\left( x|a,q\right)
\allowbreak f_{bN}\left( x|a,q\right) =\allowbreak \left\{ 
\begin{array}{ccc}
0 & if & n\neq m \\ 
\left[ n\right] _{q}! & if & n=m%
\end{array}%
\right. ,
\end{equation*}%
where 
\begin{equation*}
f_{bN}\left( x|a,q\right) \allowbreak =\allowbreak f_{N}\left( x|q\right) 
\frac{1}{\prod_{k=0}^{\infty }V_{q}\left( x|aq^{j}\right) },
\end{equation*}%
with similar formula for the polynomials $h_{n}\left( x|a,q\right) .$

It should be mentioned also that if $a>1$ then the measure that makes these
polynomials orthogonal has apart from absolutely continuous part with the
above mentioned density also $\#\{k:1<aq^{k}<a\}$ atoms at points 
\begin{equation}
x_{k}\allowbreak =\allowbreak (aq^{k}+a^{-1}q^{-k})/2,  \label{atoms}
\end{equation}%
with weights 
\begin{equation*}
\hat{w}_{k}\allowbreak =\allowbreak \frac{(1-a^{2}q^{2k})(a^{-2})_{\infty
}(a^{2})_{k}}{(1-a^{2})(q)_{k}}q^{-(3k^{2}+k)/2}(\frac{-1}{a^{4}})^{k}.
\end{equation*}

The family of the big $q-$Hermite polynomials will be referred to by symbol $%
bH.$ For details see \cite{KLS}(14.18.3).

\subsection{Al-Salam--Chihara}

In the literature connected with the special functions as the
Al-Salam--Chihara (ASC) function polynomials defined recursively:%
\begin{equation}
(2x-(a+b)q^{n})Q_{n}\left( x|a,b,q\right) =Q_{n+1}\left( x|a,b,q\right)
\allowbreak +(1-abq^{n-1})(1-q^{n})Q_{n-1}(x|a,b,q),  \label{AlSC1}
\end{equation}%
with $Q_{-1}\left( x|a,b,q\right) \allowbreak =\allowbreak 0,$ $Q_{0}\left(
x|a,b,q\right) \allowbreak =\allowbreak 1$. From Favard's theorem (\cite{IA}%
) it follows that if $\left\vert ab\right\vert \leq 1,$ then there exists
positive measure with respect to which polynomials $Q_{n}$ are orthogonal.
Further when $\left\vert a\right\vert ,\left\vert b\right\vert <1,$ then
this measure has density.

As in the case of big $q-$Hermite polynomials if one of the parameters $a$
and $b$ is greater than $1$ then the measure that makes ASC polynomials
orthogonal has $\#\{k:1<aq^{k}<a\}$ atoms located at points $x_{k}$ defined
by (\ref{atoms}) with weights given by :%
\begin{equation*}
\hat{w}_{k}\allowbreak =\allowbreak \frac{(a^{-2})_{\infty
}(1-a^{2}q^{2k})(a^{2},ab)_{k}}{(b/a)_{\infty }(1-a^{2})(q,aq/b)_{k}}%
q^{-k^{2}}(\frac{1}{a^{3}b})^{k}.
\end{equation*}%
For details see \cite{KLS}(14.8.3).

Since we are interested in the ASC polynomials in connection with the $q-$%
Hermite polynomials we will consider only the case $\left\vert a\right\vert
,\left\vert b\right\vert <1$.

We will more often use these polynomials with new parameters $\rho $ and $y$
defined by $a\allowbreak =\allowbreak \frac{\sqrt{1-q}}{2}\rho (y\allowbreak
-\allowbreak i\sqrt{\frac{4}{1-q}-y^{2}}),b\allowbreak =\allowbreak \frac{%
\sqrt{1-q}}{2}\rho (y\allowbreak +\allowbreak i\sqrt{\frac{4}{1-q}-y^{2}}),$
such that $y^{2}\leq 4/(1-q),$ $\left\vert \rho \right\vert <1$. To support
intuition let us remark:%
\begin{equation*}
a+b=\sqrt{1-q}\rho y,~~ab=\rho ^{2}.
\end{equation*}%
More precisely we will also consider the polynomials 
\begin{equation}
P_{n}\left( x|y,\rho ,q\right) \allowbreak =Q_{n}\left( x\frac{\sqrt{1-q}}{2}%
|\frac{\rho \sqrt{1-q}}{2}(y\allowbreak -\allowbreak i\sqrt{\frac{4}{1-q}%
-y^{2}}),\frac{\rho \sqrt{1-q}}{2}(y\allowbreak +\allowbreak i\sqrt{\frac{4}{%
1-q}-y^{2}}),q\right) /(1-q)^{n/2}.  \label{podstawienie}
\end{equation}%
It is also of use to consider another version of the ASC polynomials, namely
for $\left\vert x\right\vert ,\left\vert y\right\vert ,\left\vert \rho
\right\vert ,\left\vert q\right\vert \allowbreak <\allowbreak 1:$ 
\begin{equation}
p_{n}\left( x|y,\rho ,q\right) =P_{n}\left( \frac{2x}{\sqrt{1-q}}|\frac{2y}{%
\sqrt{1-q}},\rho ,q\right) .  \label{male_p}
\end{equation}%
One can easily show that the polynomials $P_{n}$ and $p_{n}$ satisfy the
following $3-$term recurrence:

\begin{gather}
(x-\rho yq^{n})P_{n}(x|y,\rho ,q)=  \label{AlSC} \\
P_{n+1}(x|y,\rho ,q)+(1-\rho ^{2}q^{n-1})[n]_{q}P_{n-1}(x|y,\rho ,q), \\
2(x-\rho yq^{n})p_{n}\left( x|y,\rho ,q\right) =\allowbreak  \label{alsc} \\
p_{n+1}\left( x|y,\rho ,q\right) \allowbreak +\left( 1-\rho
^{2}q^{n-1}\right) (1-q^{n})p_{n-1}\left( x|y,\rho ,q\right) ,
\end{gather}%
with $P_{-1}\left( x|y,\rho ,q\right) \allowbreak =\allowbreak p_{-1}\left(
x|y,\rho ,q\right) \allowbreak =\allowbreak 0,$ $P_{0}\left( x|y,\rho
,q\right) \allowbreak =\allowbreak p_{0}\left( x|y,\rho ,q\right)
\allowbreak =\allowbreak 1$ since as stated above $a\allowbreak +\allowbreak
b\allowbreak =\allowbreak \rho y\sqrt{1-q}$ and $ab\allowbreak =\allowbreak
\rho ^{2}$ in the case of the polynomials $P$ and $a\allowbreak +\allowbreak
b\allowbreak =\allowbreak 2\rho y$ and $ab\allowbreak =\allowbreak \rho ^{2}$
in the case of the polynomials $p$.

The polynomials $\left\{ P_{n}\right\} $ have a nice probabilistic
interpretation see e.g. \cite{bms}. To support intuition let us notice that 
\begin{eqnarray*}
P_{n}\left( x|y,\rho ,1\right) &=&(1-\rho ^{2})^{n/2}H_{n}\left( \frac{%
x-\rho y}{\sqrt{1-\rho ^{2}}}\right) , \\
P_{n}\left( x|y,\rho ,0\right) &=&U_{n}\left( x/2\right) -\rho
yU_{n-1}\left( x/2\right) +\rho ^{2}U_{n-2}\left( x/2\right) ,
\end{eqnarray*}%
if we define $U_{-r}\left( x\right) =0,$ $r\geq 1.$

We have the following orthogonality relationships (see \cite{KLS}(14.8.2))
satisfied for $\left\vert a\right\vert ,\left\vert b\right\vert <1$:%
\begin{equation*}
\int_{-1}^{1}Q_{n}\left( x|a,b,q\right) Q_{m}\left( x|a,b,q\right) \omega
\left( x|a,b,q\right) dx=\left\{ 
\begin{array}{ccc}
0 & if & n\neq m \\ 
\left( q\right) _{n}\left( ab\right) _{n} & if & m=m%
\end{array}%
\right. ,
\end{equation*}%
where 
\begin{equation*}
\omega \left( x|a,b,q\right) =\frac{\left( q\right) _{\infty }\left(
ab\right) _{\infty }}{2\pi \sqrt{1-x^{2}}}\prod_{k=0}^{\infty }\frac{l\left(
x|q^{k}\right) }{%
((1-abq^{2k})^{2}-2x(a+b)q^{k}(1+abq^{2k})+q^{2k}ab(4x^{2}+(a+b)^{2}/(ab))}.
\end{equation*}%
Also after passing to the parameters $\rho $ and $y$ we get (see \cite{bms}):%
\begin{equation}
\int_{S\left( q\right) }P_{n}(x|y,\rho ,q)P_{m}\left( x|y,\rho ,q\right)
f_{CN}\left( x|y,\rho ,q\right) dx=\left\{ 
\begin{array}{ccc}
0 & if & m\neq n \\ 
\left[ n\right] _{q}!\left( \rho ^{2}\right) _{n} & if & m=n%
\end{array}%
\right. ,  \label{PnPm}
\end{equation}%
where we denoted for $\left\vert q\right\vert <1$ : 
\begin{subequations}
\begin{equation}
f_{CN}\left( x|y,\rho ,q\right) =\frac{\sqrt{1-q}\left( q\right) _{\infty
}\left( \rho ^{2}\right) _{\infty }}{2\pi \sqrt{L_{q}\left( x|1\right) }}%
\prod_{k=0}^{\infty }\frac{L_{q}\left( x|q^{k}\right) }{W_{q}\left( x,y|\rho
q^{k}\right) }.  \label{fCN}
\end{equation}%
Let us notice that : 
\end{subequations}
\begin{equation*}
f_{CN}\left( x|y,\rho ,q\right) =f_{N}\left( x|q\right) \frac{\left( \rho
^{2}\right) _{\infty }}{\prod_{k=0}^{\infty }W_{q}\left( x,y|\rho
q^{k}\right) }.
\end{equation*}%
We also set 
\begin{equation}
f_{CN}\left( x|y,\rho ,1\right) \allowbreak =\allowbreak \frac{1}{\sqrt{2\pi
\left( 1-\rho ^{2}\right) }}\exp \left( -\frac{\left( x-\rho y\right) ^{2}}{%
2\left( 1-\rho ^{2}\right) }\right) .  \label{q-CGauss}
\end{equation}%
Notice that we have also 
\begin{equation*}
f_{CN}\left( x|y,\rho ,0\right) \allowbreak =\allowbreak \frac{(1-\rho ^{2})%
\sqrt{4-x^{2}}}{2\pi W_{q}(x,y|\rho )},
\end{equation*}%
which is called Kesten--McKay density.

Again one shows (see e.g. \cite{ISV87}) that 
\begin{equation*}
f_{CN}\left( x|y,\rho ,q\right) \underset{q\rightarrow 1^{-}}{%
\longrightarrow }f_{CN}\left( x|y,\rho ,1\right) .
\end{equation*}%
Following \cite{SzablKer} we have: $\forall \left\vert q\right\vert
<1,x,y\in S\left( q\right) :$%
\begin{equation*}
0<\frac{\left( \rho ^{2}\right) _{\infty }}{\left( -\left\vert \rho
\right\vert \right) _{\infty }^{4}}\leq \frac{f_{CN}\left( x|y,\rho
,q\right) }{f_{N}\left( x|q\right) }\leq \frac{\left( \rho ^{2}\right)
_{\infty }}{\left( \left\vert \rho \right\vert \right) _{\infty }^{4}}.
\end{equation*}%
One shows (see e.g. \cite{bms})) that for $\left\vert x\right\vert $ ,$%
\left\vert z\right\vert \in S\left( q\right) :$%
\begin{equation*}
\int_{S\left( q\right) }f_{CN}\left( x|y,\rho _{1},q\right) f_{CN}\left(
y|z,\rho _{2},q\right) dy=f_{CN}\left( x|z,\rho _{1}\rho _{2},q\right) .
\end{equation*}%
This property is nothing else but Chapman--Kolmogorov property satisfied by
the density $f_{CN}$ interpreted as the density of the transition
distribution of some Markov chain.

Distribution with the density $f_{CN}$ is sometimes called conditional $q-$%
Gaussian or conditional $q-$Normal since we have (\ref{q-CGauss}). It
appeared in \cite{Bo} and later was analyzed in \cite{bryc1} and \cite{bms}.

We also have 
\begin{equation*}
\sum_{k=0}^{\infty }\frac{t^{k}}{\left( q\right) _{k}}Q_{k}\left(
x|a,b,q\right) =\frac{\left( at,bt\right) _{\infty }}{\prod_{j=0}^{\infty
}v\left( x|tq^{j}\right) },
\end{equation*}%
and for the parameters $\rho $ and $y:$%
\begin{equation*}
\sum_{k=0}^{\infty }\frac{t^{k}}{\left[ k\right] _{q}!}P_{k}\left( x|y,\rho
,q\right) =\prod_{j=0}^{\infty }\frac{V_{q}\left( y|\rho tq^{j}\right) }{%
V_{q}\left( x|tq^{j}\right) }.
\end{equation*}

\subsection{Continuous $q-$utraspherical polynomials}

It turns out that the polynomials $\left\{ H_{n}\right\} _{n\geq -1}$ are
also related to another family of orthogonal polynomials $\left\{
C_{n}\left( x|\beta ,q\right) \right\} _{n\geq -1}$ which was considered by
Rogers in 1894 (see \cite{Rogers2}). Now they are called the continuous $q-$%
utraspherical polynomials. The polynomials $C_{n}$ can be defined through
their $3-$recurrence (see \cite{KLS}(14.10.19))

\begin{equation*}
2(1-\beta q^{n})xC_{n}(x|\beta ,q)=(1-q^{n+1})C_{n+1}\left( x|\beta
,q\right) \allowbreak +(1-\beta ^{2}q^{n-1})C_{n-1}\left( x|\beta ,q\right) ,
\end{equation*}%
for $n\geq 0,$ with $C_{-1}\left( x|\beta ,q\right) \allowbreak =\allowbreak
0,$ $C_{0}\left( x|\beta ,q\right) \allowbreak =\allowbreak 1,$ where $\beta 
$ is a real parameter such that $\left\vert \beta \right\vert <1$. One shows
(see e.g. \cite{IA}(13.2.1)) that for $\left\vert q\right\vert ,\left\vert
\beta \right\vert <1,$ $\forall n\in \mathbb{N}:$ 
\begin{equation*}
C_{n}\left( x|\beta ,q\right) \allowbreak =\allowbreak \sum_{k=0}^{n}\frac{%
\left( \beta \right) _{k}\left( \beta \right) _{n-k}}{\left( q\right)
_{k}\left( q\right) _{n-k}}e^{i\left( n-2k\right) \theta },
\end{equation*}%
where $x=\cos \theta .$ Hence we have (following formula (\ref{cH})): 
\begin{equation*}
C_{n}\left( x|0,q\right) \allowbreak =\allowbreak \frac{h_{n}\left(
x|q\right) }{\left( q\right) _{n}}.
\end{equation*}%
In fact we will consider slightly modified polynomials $C_{n}.$ Namely we
will consider polynomials $R_{n}\left( x|\beta ,q\right) $ related to
polynomials $C_{n}$ through the relationship: 
\begin{equation}
C_{n}\left( x|\beta ,q\right) \allowbreak =\allowbreak \left( 1-q\right)
^{n/2}R_{n}\left( \frac{2x}{\sqrt{1-q}}|\beta ,q\right) /\left( q\right)
_{n},n\geq 1.  \label{q-US}
\end{equation}%
One can easily check that the polynomials $\left\{ R_{n}\right\} $ satisfy
the following $3-$term recurrence:%
\begin{equation}
\left( 1-\beta q^{n}\right) xR_{n}\left( x|\beta ,q\right) =R_{n+1}\left(
x|\beta ,q\right) +\left( 1-\beta ^{2}q^{n-1}\right) \left[ n\right]
_{q}R_{n-1}\left( x|\beta ,q\right) .  \label{R}
\end{equation}%
We have an easy Proposition

\begin{proposition}
\label{szczeg}For $n\geq 1:$ i) $R_{n}\left( x|0,q\right) \allowbreak
=\allowbreak H_{n}\left( x|q\right) ,$

ii) $R_{n}\left( x|q,q\right) \allowbreak =\allowbreak \left( q\right)
_{n}U_{n}\left( x\sqrt{1-q}/2\right) ,$

iii) $\lim_{\beta ->1^{-}}\frac{R_{n}\left( x|\beta ,q\right) }{\left( \beta
\right) _{n}}\allowbreak =\allowbreak 2\frac{T_{n}\left( x\sqrt{1-q}%
/2\right) }{(1-q)^{n/2}}.$
\end{proposition}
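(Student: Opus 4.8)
The plan is to prove all three parts by the standard device of \emph{uniqueness of the polynomial sequence generated by a three-term recurrence with prescribed initial data}, specialising the recurrence (\ref{R}) in the parameter $\beta$; part (iii) needs in addition a short $\beta\to1^-$ limit computation. Where a closed form is more convenient than a recurrence check I would use the known expansion $C_n(x|\beta,q)=\sum_{k=0}^{n}\frac{(\beta)_k(\beta)_{n-k}}{(q)_k(q)_{n-k}}e^{i(n-2k)\theta}$, $x=\cos\theta$, together with the rescaling (\ref{q-US}) relating $C_n$ and $R_n$.

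Part (i) is immediate: putting $\beta=0$ in (\ref{R}) collapses it to $R_{n+1}(x|0,q)=xR_n(x|0,q)-[n]_qR_{n-1}(x|0,q)$, which is exactly the recurrence (\ref{He}) for $H_n(x|q)$; since the two sequences agree at $n=0$ and $n=1$, induction finishes it. For part (ii) I would set $\beta=q$ in (\ref{R}); then $1-\beta q^n=1-q^{n+1}=1-\beta^2q^{n-1}$, so the recurrence becomes $R_{n+1}(x|q,q)=(1-q^{n+1})\bigl(xR_n(x|q,q)-[n]_qR_{n-1}(x|q,q)\bigr)$, and one checks directly — using $(q)_{n+1}=(1-q^{n+1})(q)_n$, the identity $1-q^n=(1-q)[n]_q$, and the Chebyshev recurrence $U_{n+1}(\xi)=2\xi U_n(\xi)-U_{n-1}(\xi)$ with $\xi=x\sqrt{1-q}/2$ — that the asserted closed form solves this recurrence, with the two base cases matching. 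Equivalently and faster: at $\beta=q$ every coefficient in the expansion above collapses to $1$, so $C_n(x|q,q)=\sum_{k=0}^{n}e^{i(n-2k)\theta}=\sin((n+1)\theta)/\sin\theta=U_n(\cos\theta)$ by (\ref{repCh2}), and (\ref{q-US}) then yields (ii).

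Part (iii) is where the only real care is needed, because for $n\ge1$ both $R_n(x|\beta,q)$ and $(\beta)_n$ have a simple zero at $\beta=1$, so the ratio is a genuine $0/0$. The cleanest route is again through the expansion: by (\ref{q-US}), $R_n(x|\beta,q)/(\beta)_n=\frac{(q)_n}{(1-q)^{n/2}}\sum_{k=0}^{n}\frac{(\beta)_k(\beta)_{n-k}}{(\beta)_n(q)_k(q)_{n-k}}e^{i(n-2k)\theta}$ with $\cos\theta=x\sqrt{1-q}/2$; for $0<k<n$ the numerator $(\beta)_k(\beta)_{n-k}$ carries two factors $(1-\beta)$ against the single one in $(\beta)_n$, so these terms tend to $0$ as $\beta\to1^-$, while the $k=0$ and $k=n$ terms are independent of $\beta$ and equal $e^{in\theta}/(q)_n$ and $e^{-in\theta}/(q)_n$, giving the limit $\frac{e^{in\theta}+e^{-in\theta}}{(1-q)^{n/2}}=\frac{2T_n(x\sqrt{1-q}/2)}{(1-q)^{n/2}}$ by (\ref{repCh}). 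If one prefers to stay with recurrences, dividing (\ref{R}) by $(\beta)_{n+1}$ produces a recurrence for $\widetilde R_n:=R_n/(\beta)_n$ whose coefficient is $\frac{(1-\beta^2q^{n-1})[n]_q}{(1-\beta q^n)(1-\beta q^{n-1})}$: for $n\ge2$ this tends to $1/(1-q)$, but at $n=1$ it is indeterminate and, after cancelling $(1-\beta)$ from $1-\beta^2$, tends to $2/(1-q)$ — this single exceptional step is precisely what converts the $U$-type recurrence into the one obeyed by $2T_n(x\sqrt{1-q}/2)/(1-q)^{n/2}$ (recall $T_1=x$, $T_0=1$ against the $U$-recurrence), so matching the two sequences at $n=1,2$ and inducting gives the claim for $n\ge1$. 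I expect this $n=1$ boundary step to be the only point a reader might stumble on; everything else is routine verification, as the word ``easy'' in the statement suggests.
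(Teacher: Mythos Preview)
Your argument is correct and, in its recurrence-based track, coincides with the paper's proof: the paper also specialises (\ref{R}) at $\beta=0$ for (i), rescales $R_n(x|q,q)$ by $(1-q)^{n/2}/(q)_n$ to recover the $U_n$ recurrence for (ii), and for (iii) divides through by $(\beta)_{n+1}$ to obtain a recurrence for $F_n=R_n/(\beta)_n$ before letting $\beta\to1^-$. Two differences are worth noting. First, you offer an alternative route for (ii) and (iii) via the explicit expansion of $C_n(x|\beta,q)$ in exponentials; this is not in the paper and is arguably cleaner for (iii), since the vanishing of the interior terms and the survival of the two boundary terms $k=0,n$ is transparent. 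Second, you are more careful than the paper about the $n=1$ step in (iii): the paper simply writes the limiting recurrence as $F_{n+1}=xF_n-\frac{1}{1-q}F_{n-1}$ from $F_{-1}=0$, $F_0=1$, without flagging that the coefficient is actually $2/(1-q)$ at $n=1$; your observation that this single exceptional step is exactly what reconciles $F_0=1$ with $G_0:=2T_0/(1-q)^0=2$ (while $F_1=G_1=x$ and the recurrences agree for $n\ge2$) fills that small gap.
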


\begin{proof}
i) direct calculation. ii) We have for $\beta \allowbreak =\allowbreak q$ : $%
\tilde{R}_{n+1}\left( x|q,q\right) \allowbreak =\allowbreak x\tilde{R}%
_{n}\left( x|q,q\right) -\tilde{R}_{n-1}\left( x|q,q\right) ,$ where we
denoted $\tilde{R}_{n}(x|q,q)\allowbreak =\allowbreak (1-q)^{n/2}R_{n}\left(
x|q,q\right) /\left( q\right) _{n}.$ So visibly since $R_{0}\left(
x|q,q\right) \allowbreak =\allowbreak 1$ and $R_{1}\left( x|q,q\right)
\allowbreak =\allowbreak x.$ iii) Let us first denote $F_{n}\left( x|\beta
,q\right) \allowbreak =\allowbreak \frac{R_{n}\left( x|\beta ,q\right) }{%
\left( \beta \right) _{n}},$ write the $3\allowbreak -$term recurrence for
it obtaining 
\begin{equation*}
F_{n+1}\left( x|\beta ,q\right) \allowbreak =\allowbreak xF_{n}\left(
x|\beta ,q\right) \allowbreak -\allowbreak \frac{(1-q^{n})(1-\beta
^{2}q^{n-1})}{(1-q)(1-\beta q^{n})(1-\beta q^{n-1})}F_{n-1}\left( x|\beta
,q\right) ,
\end{equation*}
with $F_{-1}\left( x|\beta ,q\right) \allowbreak =\allowbreak 0,$ $%
F_{0}\left( x|\beta ,q\right) \allowbreak =\allowbreak 1$ and let $\beta
->1^{-}.$ We immediately see that the limit, denote it by $F_{n}\left(
x|1,q\right) ,$ satisfies the following the $3-$term recurrence: 
\begin{equation*}
F_{n+1}\left( x|1,q\right) \allowbreak =\allowbreak xF_{n}\left(
x|1,q\right) \allowbreak -\allowbreak \frac{F_{n-1}\left( x|1,q\right) }{%
(1-q)},
\end{equation*}
which confronted with the $3-$term recurrence satisfied by the polynomials $%
T_{n}$ proves our assertion.
\end{proof}

It is known that (see e.g. \cite{IA}(13.2.4)): 
\begin{eqnarray*}
\int_{-1}^{1}C_{n}\left( x|\beta ,q\right) C_{m}\left( x|\beta ,q\right)
f_{C}\left( x|\beta ,q\right) dx &=&\left\{ 
\begin{array}{ccc}
0 & if & m\neq n \\ 
\frac{\left( \beta ^{2}\right) _{n}}{\left( 1-\beta q^{n}\right) \left(
q\right) _{n}} & if & m=n%
\end{array}%
\right. , \\
\int_{S\left( q\right) }R_{n}\left( x|\beta ,q\right) R_{m}\left( x|\beta
,q\right) f_{R}\left( x|\beta ,q\right)  &=&\left\{ 
\begin{array}{ccc}
0 & when & n\neq m \\ 
\frac{\left( 1-\beta \right) \left( \beta ^{2}\right) _{n}[n]_{q}!}{\left(
1-\beta q^{n}\right) } & when & n=m%
\end{array}%
\right. ,
\end{eqnarray*}

where we denoted 
\begin{eqnarray}
f_{C}(x|\beta ,q)\allowbreak &=&\allowbreak \frac{(\beta ^{2})_{\infty }}{%
(1-\beta )(\beta ,\beta q)_{\infty }}f_{h}\left( x|q\right)
/\prod_{j=1}^{\infty }l\left( x|\beta q^{j}\right) ,  \label{fR} \\
f_{R}\left( x|\beta ,q\right) &=&\sqrt{1-q}f_{C}(x\sqrt{1-q}/2|q)/2 \\
&=&\frac{\left( q,\beta ^{2}\right) _{\infty }\sqrt{1-q}}{\left( \beta
,\beta q\right) _{\infty }2\pi \sqrt{L_{q}\left( x|1\right) }}%
\prod_{k=0}^{\infty }\frac{L_{q}\left( x|q^{k}\right) }{L_{q}\left( x|\beta
q^{k}\right) }.
\end{eqnarray}%
Let us remark that 
\begin{equation*}
f_{R}\left( x|\beta ,q\right) =f_{N}\left( x|q\right) \frac{\left( \beta
^{2}\right) _{\infty }}{\left( \beta ,\beta q\right) _{\infty
}\prod_{k=0}^{\infty }L_{q}\left( x|\beta q^{k}\right) }.
\end{equation*}%
Notice also that examining the 3-term recurrence satisfied by $P_{n}$ and $%
R_{n}$we see $\forall n\geq -1:$ 
\begin{equation*}
P_{n}\left( x|x,\rho ,q\right) =R_{n}\left( x|\rho ,q\right) ,
\end{equation*}%
and that for $\left\vert x\right\vert ,\left\vert y\right\vert \in S\left(
q\right) $%
\begin{equation*}
f_{CN}\left( x|x,\rho ,q\right) /(1-\rho )\allowbreak =\allowbreak
f_{R}\left( x|\rho ,q\right) ,
\end{equation*}%
since we have $(1-\rho ^{2}q^{2k})^{2}\allowbreak -\allowbreak (1-q)\rho
q^{k}(1+\rho ^{2}q^{2k})x^{2}\allowbreak +\allowbreak 2(1-q)\rho
^{2}x^{2}q^{2k}\allowbreak =\allowbreak \left( 1-\rho q^{k}\right)
^{2}(\left( 1+\rho q^{k}\right) ^{2}\allowbreak -\allowbreak (1-q)\rho
x^{2}q^{k})$ and the fact that$\frac{\left( \rho \right) _{\infty }\left(
\rho q\right) _{\infty }}{\left( \rho \right) _{\infty }^{2}}\allowbreak
=\allowbreak \frac{1}{1-\rho }.$

We also have 
\begin{eqnarray}
\sum_{k=0}^{\infty }t^{k}C_{k}\left( x|\beta ,q\right)
&=&\prod_{k=0}^{\infty }\frac{v\left( x|\beta tq^{k}\right) }{v\left(
x|tq^{k}\right) },  \label{fgC} \\
\sum_{k=0}^{\infty }\frac{t^{k}}{\left[ k\right] _{q}!}R_{k}\left( x|\beta
,q\right) &=&\prod_{j=0}^{\infty }\frac{V_{q}\left( x|\beta tq^{k}\right) }{%
V_{q}\left( x|q^{k}t\right) }.  \label{fgR}
\end{eqnarray}

\begin{remark}
Assertion ii) of Proposition \ref{szczeg} could have been deduced also from (%
\ref{fgR}), namely putting $\beta \allowbreak =\allowbreak q$ we get $%
\sum_{k=0}^{\infty }\frac{t^{k}}{\left[ k\right] _{q}!}R_{k}\left(
x|q,q\right) \allowbreak =\allowbreak \frac{1}{(1-(1-q)tx+(1-q)t^{2})}$which
confronted with (\ref{_gU}) and formula $\left( q\right) _{n}=\left(
1-q\right) ^{n}\left[ n\right] _{q}!$ leads to the conclusion that $%
R_{k}\left( x|q,q\right) /\left( q\right) _{k}\allowbreak =\allowbreak
U_{k}\left( x\sqrt{1-q}/2\right) $.$\allowbreak $ Following this idea we see
that 
\begin{equation*}
\sum_{k=0}^{\infty }\frac{t^{k}}{\left[ k\right] _{q}!}R_{k}\left(
x|q^{2},q\right) \allowbreak =\allowbreak \frac{1}{%
(1-(1-q)tx+(1-q)t^{2})(1-(1-q)txq+(1-q)t^{2}q^{2})}.
\end{equation*}
\newline
Hence $R_{k}\left( x|q^{2},q\right) /\left( q\right) _{k}\allowbreak
=\allowbreak \sum_{k=0}^{n}q^{k}U_{k}\left( x\sqrt{1-q}/2\right)
U_{n-k}\left( x\sqrt{1-q}/2\right) $ using common knowledge on the
properties of the generating functions. Simple 'generating functions'
argument shows that $\sum_{k=0}^{n}q^{k}U_{k}\left( x\sqrt{1-q}/2\right)
U_{n-k}\left( x\sqrt{1-q}/2\right) $ simplifies to $\sum_{j=0}^{\left\lfloor
n/2\right\rfloor }q^{j}\left[ n+1-2j\right] _{q}U_{n-2j}\left( x\sqrt{1-q}%
/2\right) .$ On the other hand since these polynomials are proportional to $%
R_{k}\left( x|q^{2},q\right) $ we know their 3-term recurrence and the
density that makes them orthogonal. Similarly for other cases $R_{k}\left(
x|q^{m},q\right) ,$ $m\geq 3.$ Besides notice that $\forall n\geq -1,x\in 
\mathbb{R}$ $\lim_{m\rightarrow \infty }R_{k}\left( x|q^{m},q\right)
\allowbreak =\allowbreak H_{n}\left( x|q\right) .$
\end{remark}

We will need also two families of auxiliary polynomials.

\subsection{Rogers-Szeg\"{o}}

These polynomials are defined by the equality: 
\begin{equation*}
s_{n}\left( x|q\right) \allowbreak =\allowbreak \sum_{k=0}^{n}\QATOPD[ ] {n}{%
k}_{q}x^{k},
\end{equation*}%
for $n\geq 0$ and $s_{-1}\left( x|q\right) \allowbreak =\allowbreak 0.$ They
will be playing here an auxiliary r\^{o}le. In particular one shows (see
e.g. \cite{IA}(13.1.7)) that: 
\begin{equation}
h_{n}\left( x|q\right) \allowbreak =\allowbreak e^{in\theta }s_{n}\left(
e^{-2i\theta }|q\right) ,  \label{cH}
\end{equation}%
where $x\allowbreak =\allowbreak \cos \theta ,$ and that: 
\begin{equation*}
\sup_{\left\vert x\right\vert \leq 1}\left\vert h_{n}\left( x|q\right)
\right\vert \leq s_{n}\left( 1|q\right) .
\end{equation*}%
In the sequel the following identities discovered by Carlitz (see Exercise
12.2(b) and 12.2(c) of \cite{IA}), true for $\left\vert q\right\vert
,\left\vert t\right\vert <1$ : 
\begin{equation}
\sum_{k=0}^{\infty }\frac{s_{k}\left( 1|q\right) t^{k}}{\left( q\right) _{k}}%
\allowbreak =\allowbreak \frac{1}{\left( t\right) _{\infty }^{2}}%
,\sum_{k=0}^{\infty }\frac{s_{k}^{2}\left( 1|q\right) t^{k}}{\left( q\right)
_{k}}\allowbreak =\allowbreak \frac{\left( t^{2}\right) _{\infty }}{\left(
t\right) _{\infty }^{4}},  \label{Car_id}
\end{equation}%
will allow to show convergence of many considered in the sequel series.

\subsection{$q^{-1}-$Hermite}

We will need also polynomials $\left\{ B_{n}\left( x|q\right) \right\}
_{n\geq -1}$ defined by the following 3-term recurrence:%
\begin{equation}
B_{n+1}\left( y|q\right) \allowbreak =\allowbreak -q^{n}yB_{n}\left(
y|q\right) +q^{n-1}\left[ n\right] _{q}B_{n-1}\left( y|q\right) ,  \label{Be}
\end{equation}%
for all $n\geq 0$ and with $B_{-1}\left( y|q\right) =0,$ $B_{0}\left(
y|q\right) =1$. One easily shows that $B_{n}\left( x|1\right) \allowbreak
=\allowbreak i^{n}H_{n}\left( ix\right) $ (compare \cite{bms}). We will also
sometimes need the 'continuous' or 'lower case version' of these polynomials
namely $b_{n}\left( y|q\right) \allowbreak =\allowbreak
(1-q)^{n/2}B_{n}\left( 2y/\sqrt{1-q}|q\right) $. It is easy to notice that
the polynomials $b_{n}$ satisfy the following $3-$term recurrence :%
\begin{equation}
b_{n+1}\left( y|q\right) =-2q^{n}yb_{n}\left( y|q\right)
+q^{n-1}(1-q^{n})b_{n-1}\left( y|q\right) ,  \label{maleB}
\end{equation}%
with $b_{-1}\left( y|q\right) \allowbreak =\allowbreak 0,$ $b_{0}\left(
y|q\right) \allowbreak =\allowbreak 1$. Moreover if we consider $\tilde{b}%
_{n}\left( y|q\right) \allowbreak =\allowbreak q^{-n(n-1)/2}i^{n}b_{n}\left(
iy|q\right) $ then we see that the polynomials $\tilde{b}_{n}$ satisfy the
following 3-term recurrence: 
\begin{equation*}
\tilde{b}_{n+1}\left( y|q\right) =2y\tilde{b}_{n}\left( y|q\right)
-(q^{-n}-1)\tilde{b}_{n-1}\left( y|q\right) ,
\end{equation*}%
hence $\tilde{b}_{n}^{\prime }s$ are orthogonal for $q>1.$ The point is that
there does not exit the unique measure that makes these polynomials
orthogonal. Discussion of this case is thoroughly done in \cite{IsMas94}.
However polynomials $\left\{ B_{n}\right\} $ will be of great help in the
sequel.

Notice (\cite{bms}) that 
\begin{equation*}
\sum_{k=0}^{\infty }\frac{t^{k}}{\left[ k\right] _{q}!}B_{k}\left(
x|q\right) \allowbreak =\allowbreak \prod_{j=0}^{\infty }V_{q}\left(
x|q^{j}t\right) .
\end{equation*}

Hence in particular we get: $B_{n}\left( y|0\right) \allowbreak =\allowbreak
\left\{ 
\begin{array}{ccc}
-y & if & n=1 \\ 
1 & if & n=2\vee n=0 \\ 
0 & if & n\geq 3%
\end{array}%
\right. .$ Comparing the above mentioned formulae we see that%
\begin{equation*}
\sum_{k=0}^{\infty }\frac{t^{k}}{\left[ k\right] _{q}!}B_{k}\left(
x|q\right) =1/\sum_{k=0}^{\infty }\frac{t^{k}}{\left[ k\right] _{q}!}%
H_{k}\left( x|q\right) .
\end{equation*}

\section{Connection coefficients and other useful finite expansions}

\subsection{Connection coefficients}

We consider $n\geq 0$

T\&U

\begin{eqnarray*}
T_{n}\left( x\right) \allowbreak &=&\allowbreak \left( U_{n}\left( x\right)
-U_{n-2}\left( x\right) \right) /2, \\
U_{n}\left( x\right) &=&2\sum_{k=0}^{\left\lfloor n/2\right\rfloor
}T_{n-2k}\left( x\right) \allowbreak -\allowbreak \left( 1+\left( -1\right)
^{n}\right) /2.
\end{eqnarray*}

These expansions belong to common knowledge of the special functions theory

H\&T

\begin{equation*}
H_{n}\left( x|q\right) \allowbreak =\allowbreak (1-q)^{-n/2}\sum_{k=0}^{n}%
\QATOPD[ ] {n}{k}_{q}T_{n-2k}\left( x\sqrt{1-q}/2\right) ,
\end{equation*}%
if one sets $T_{-n}\left( x\right) \allowbreak =\allowbreak T_{n}\left(
x\right) ,$ $n\geq 0.$

First notice that (\ref{cH}) is equivalent to $h_{n}\left( x\right)
\allowbreak =\allowbreak \sum_{k=0}^{n}\QATOPD[ ] {n}{k}_{q}\cos \left(
2k-n\right) \theta $ where $x=\cos \theta .$ Next we use (\ref{repCh})

H\&H

\begin{equation*}
H_{n}\left( x|p\right) \allowbreak =\allowbreak \sum_{k=0}^{\left\lfloor
n/2\right\rfloor }\tilde{C}_{n,n-2k}\left( p,q\right) H_{n-2k}\left(
x|q\right) ,
\end{equation*}%
where 
\begin{gather*}
\tilde{C}_{n,n-2k}\allowbreak (p,q)\allowbreak =\allowbreak \frac{\left(
1-q\right) ^{n/2-k}}{(1-p)^{n/2}}\times \\
\sum_{j=0}^{k}\left( -1\right) ^{j}p^{k-j}q^{j\left( j+1\right) /2}\QATOPD[ ]
{n-2k+j}{j}_{q}\allowbreak (\QATOPD[ ] {n}{k-j}_{p}-p^{n-2k+2j+1}\QATOPD[ ] {%
n}{k-j-1}_{p}).
\end{gather*}

This formula follows the 'change of base' formula for the continuous $q-$%
Hermite polynomials (i.e. polynomials $h_{n}$) in e.g. \cite{IS03}, \cite%
{bressoud} or \cite{GIS99} (formula 7.2) that states that: 
\begin{equation*}
h_{n}\left( x|p\right) =\sum_{k=0}^{\left\lfloor n/2\right\rfloor
}c_{n,n-2k}\left( p,q\right) h_{n-2k}\left( x|q\right) ,
\end{equation*}%
where 
\begin{equation*}
c_{n,n-2k}\left( p,q\right) =\frac{\left( 1-p\right) ^{n/2}}{\left(
1-q\right) ^{n/2-k}}\tilde{C}_{n,n-2k}\allowbreak (p,q)\allowbreak .
\end{equation*}

U\&H

\begin{gather*}
U_{n}\left( x\sqrt{1-q}/2\right) =\sum_{j=0}^{\left\lfloor n/2\right\rfloor
}\left( -1\right) ^{j}(1-q)^{n/2-j}q^{j\left( j+1\right) /2}\QATOPD[ ] {n-j}{%
j}_{q}H_{n-2j}\left( x|q\right) , \\
H_{n}\left( y|q\right) =\sum_{k=0}^{\left\lfloor n/2\right\rfloor
}(1-q)^{-n/2}\frac{q^{k}-q^{n-k+1}}{1-q^{n-k+1}}\QATOPD[ ] {n}{k}%
_{q}U_{n-2k}\left( y\sqrt{1-q}/2\right) .
\end{gather*}

These expansion follow the previous one setting once $p\allowbreak
=\allowbreak 0$ and then secondly $q=0$ and then $p=q.$

H\&bH

\begin{eqnarray}
h_{n}\left( x|a,q\right) \allowbreak &=&\allowbreak \sum_{k=0}^{n}\QATOPD[ ]
{n}{k}_{q}(-1)^{k}q^{\binom{k}{2}}a^{k}h_{n-k}\left( x|q\right) ,
\label{bigh} \\
H_{n}\left( x|a,q\right) \allowbreak &=&\sum_{k=0}^{n}\QATOPD[ ] {n}{k}%
_{q}(-1)^{k}q^{\binom{k}{2}}a^{k}H_{n-k}\left( x|q\right) .  \label{bigH}
\end{eqnarray}

(\ref{bigh}) it is formula 19 of \cite{Chen08} (see also \cite{Floreanini97}%
). (\ref{bigH}) is a simple consequence of (\ref{bigh}).

H\&P

\begin{eqnarray}
P_{n}\left( x|y,\rho ,q\right) &=&\sum_{j=0}^{n}\QATOPD[ ] {n}{j}\rho
^{n-j}B_{n-j}\left( y|q\right) H_{j}\left( x|q\right) ,  \label{PnaH} \\
H_{n}\left( x|q\right) &=&\sum_{j=0}^{n}\QATOPD[ ] {n}{j}\rho
^{n-j}H_{n-j}\left( y|q\right) P_{j}\left( x|y,\rho ,q\right) .  \label{HnaP}
\end{eqnarray}

For the proof of (\ref{PnaH}) see Remark 1 following Theorem 1 in \cite{bms}%
. For the proof of (\ref{HnaP}) we start with formula (4.7) in \cite{IRS99}
that gives connection coefficients of $h_{n}$ with respect to $Q_{n}.$ Then
we pass to the polynomials $H_{n}$ \& $P_{n}$ using formulae $h_{n}\left(
x|q\right) \allowbreak =\allowbreak \left( 1-q\right) ^{n/2}H_{n}\left( 
\frac{2x}{\sqrt{1-q}}|q\right) ,$ $n\geq 1$ and $p_{n}(x|a,b,q)\allowbreak
=\allowbreak \left( 1-q\right) ^{n/2}P_{n}\left( \frac{2x}{\sqrt{1-q}}|\frac{%
2a}{\sqrt{\left( 1-q\right) b}},\sqrt{b},q\right) .$ By the way notice that
this formula can be easily derived from assertions iv) and (\ref{suma BH})
with $m\allowbreak =\allowbreak 0$ presented below and the standard change
of order of summation. Now it remains to return to polynomials $H_{n}$ .

As a corollary of (\ref{HnaP}) and (\ref{PnPm}) we get a nice formula given
in \cite{bms}: For $\forall n\geq 1,\allowbreak \left\vert \rho \right\vert
<1,$\allowbreak $y\in S\left( q\right) $%
\begin{equation*}
\int_{S\left( q\right) }H_{n}\left( x|q\right) f_{CN}\left( x|y,\rho
,q\right) dx=\rho ^{n}H_{n}\left( y|q\right) .
\end{equation*}

bH\&P

\begin{eqnarray}
H_{n}\left( x|a,q\right) \allowbreak &=&\allowbreak \sum_{j=0}^{n}\QATOPD[ ]
{n}{j}_{q}P_{j}\left( x|y,\frac{a}{b},q\right) \left( \frac{a}{b}\right)
^{n-j}H_{n-j}\left( y|b,q\right) ,  \label{bHna P} \\
P_{n}\left( x|y,\rho ,q\right) \allowbreak &=&\allowbreak \sum_{k=0}^{n}%
\QATOPD[ ] {n}{k}_{q}\rho ^{n-k}B_{n-k}\left( x|a/\rho ,q\right) H_{k}\left(
x|a,q\right) ,  \label{PnabH}
\end{eqnarray}

where we denoted $B_{m}\left( x|b,q\right) \overset{df}{=}\sum_{j=0}^{m}%
\QATOPD[ ] {m}{j}_{q}b^{m-j}B_{j}\left( x|q\right) .$ Strict proof of (\ref%
{bHna P}) and (\ref{PnabH}) is presented in \cite{SzablKer}. It is easy and
is based on (\ref{PnaH}) and (\ref{bigH}).

P\&P

\begin{eqnarray}
P_{n}\left( x|y,\rho ,q\right) &=&\sum_{j=0}^{n}\QATOPD[ ] {n}{j}%
_{q}r^{n-j}P_{j}\left( x|z,r,q\right) P_{n-j}(z|y,\rho /r,q),  \label{PnaP}
\\
\frac{P_{n}\left( y|z,t,q\right) }{(t^{2})_{n}}\allowbreak &=&\allowbreak
\sum_{j=0}^{n}(-1)^{j}q^{j(j-1)/2}\QATOPD[ ] {n}{j}_{q}t^{j}H_{n-j}\left(
y|q\right) \frac{P_{j}\left( z|y,t,q\right) }{\left( t^{2}\right) _{j}},
\label{odwrocenie}
\end{eqnarray}%
if one extends definition of polynomials $P_{n}$ for $\left\vert \rho
\right\vert >1$ by (\ref{PnaH}). (\ref{PnaP}) has been proved in \cite%
{SzablKer}, while (\ref{odwrocenie}) is given in \cite{Szab6} Corollary 2.
Besides it follows directly from one of the infinite expansions that will be
presented in section \ref{nieskon}.

As a corollary of (\ref{odwrocenie}) and of course (\ref{PnPm}) we get the
following formula: \newline
For $\forall n\geq 1,\allowbreak \left\vert \rho \right\vert <1,$\allowbreak 
$x\in S\left( q\right) $%
\begin{equation*}
\int_{S\left( q\right) }P_{n}\left( x|y,\rho ,q\right) f_{CN}\left( y|x,\rho
,q\right) dy=\left( \rho ^{2}\right) _{n}H_{n}\left( x|q\right) .
\end{equation*}

R\&R

For $\left\vert \beta \right\vert ,\left\vert \gamma \right\vert <1:$%
\begin{equation}
R_{n}\left( x|\gamma ,q\right) \allowbreak =\allowbreak
\sum_{k=0}^{\left\lfloor n/2\right\rfloor }\frac{\left[ n\right] _{q}!\beta
^{k}\left( \gamma /\beta \right) _{k}\left( \gamma \right) _{n-k}\left(
1-\beta q^{n-2k}\right) }{\left[ k\right] _{q}!\left[ n-2k\right]
_{q}!\left( \beta q\right) _{n-k}\left( 1-\beta \right) }R_{n-2k}\left(
x|\beta ,q\right) .  \label{RnaR}
\end{equation}%
(\ref{RnaR}) is in fact celebrated connection coefficient formula for the
Rogers polynomials which was in fact expressed in terms of polynomials $%
C_{n}.$ For details see \cite{IA},(13.3.1).

R\&H

For $\left\vert \beta \right\vert ,\left\vert \gamma \right\vert <1:$ 
\begin{eqnarray}
R_{n}\left( x|\gamma ,q\right) \allowbreak &=&\allowbreak
\sum_{k=0}^{\left\lfloor n/2\right\rfloor }(-1)^{k}\frac{q^{k(k-1)/2}\left[ n%
\right] _{q}!\gamma ^{k}\left( \gamma \right) _{n-k}}{\left[ k\right] _{q}!%
\left[ n-2k\right] _{q}!}H_{n-2k}\left( x|q\right) ,  \label{RnaH} \\
H_{n}\left( x|q\right) \allowbreak &=&\allowbreak \sum_{k=0}^{\left\lfloor
n/2\right\rfloor }\frac{\left[ n\right] _{q}!}{\left[ k\right] _{q}!\left[
n-2k\right] _{q}!}\frac{\beta ^{k}\left( 1-\beta q^{n-2k}\right) }{(1-\beta
)\left( \beta q\right) _{n-k}}R_{n-2k}\left( x|\beta ,q\right) .
\label{HnaR}
\end{eqnarray}

(\ref{RnaH}) and (\ref{HnaR}) are particular cases of (\ref{RnaR}), first
for $\beta \allowbreak =\allowbreak 0$ and the second for $\gamma
\allowbreak =\allowbreak 0.$

B\&H

\begin{equation}
B_{n}\left( x|q\right) \allowbreak =\allowbreak \left( -1\right) ^{n}q^{%
\binom{n}{2}}\sum_{k=0}^{\left\lfloor n/2\right\rfloor }\QATOPD[ ] {n}{k}_{q}%
\QATOPD[ ] {n-k}{k}_{q}\left[ k\right] _{q}!q^{k(k-n)}H_{n-2k}\left(
x|q\right) .  \label{BnaH}
\end{equation}

(\ref{BnaH}) was proved in \cite{Szab6} Lemma 2 assertion i).

As an immediate observation we have the following expansion of the ASC
polynomials in the $q$-Hermite polynomials.

\begin{proposition}
\begin{gather*}
P_{n}\left( x|y,\rho ,q\right) \allowbreak =\allowbreak
\sum_{k=0}^{\left\lfloor n/2\right\rfloor }\QATOPD[ ] {n}{k}_{q}\QATOPD[ ] {%
n-k}{k}_{q}\left[ k\right] _{q}!q^{k(k-1)}\rho ^{2k}\times \\
\sum_{s=0}^{n-2k}(-1)^{s}\QATOPD[ ] {n-2k}{s}_{q}q^{\binom{s}{2}}(q^{k}\rho
)^{s}H_{n-2k-s}\left( x|q\right) H_{s}\left( y|q\right) .
\end{gather*}
\end{proposition}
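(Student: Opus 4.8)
The plan is to chain the two finite expansions (\ref{PnaH}) and (\ref{BnaH}) already recorded above. Starting from (\ref{PnaH}) and reindexing by $m=n-j$ gives
\begin{equation*}
P_{n}\left( x|y,\rho ,q\right) =\sum_{m=0}^{n}\QATOPD[ ]{n}{m}_{q}\rho ^{m}B_{m}\left( y|q\right) H_{n-m}\left( x|q\right) .
\end{equation*}
Into this I would substitute the expansion (\ref{BnaH}) of $B_{m}(y|q)$ in the $q$-Hermite basis, producing a double sum over $m$ (with $0\leq m\leq n$) and over $k$ (with $0\leq k\leq \lfloor m/2\rfloor $) whose summand contains the product $H_{m-2k}(y|q)H_{n-m}(x|q)$. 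Since everything here is a finite sum, interchanging the order of summation requires no justification.

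Next I would change the summation variable from $m$ to $s=m-2k$, so that $m=s+2k$, the ranges become $0\leq k\leq \lfloor n/2\rfloor $ and $0\leq s\leq n-2k$, the Hermite factors become $H_{s}(y|q)H_{n-2k-s}(x|q)$ exactly as in the statement, $\rho ^{m}=\rho ^{2k}\rho ^{s}$, and $(-1)^{m}=(-1)^{s}$. It then remains to verify two elementary identities. For the powers of $q$: (\ref{BnaH}) contributes the exponent $\binom{m}{2}+k(k-m)$, and expanding $\binom{s+2k}{2}+k\bigl( k-(s+2k)\bigr) =k(k-1)+ks+\binom{s}{2}$ reproduces precisely the factors $q^{k(k-1)}$ and $q^{\binom{s}{2}}(q^{k})^{s}$ occurring in the claim. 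For the $q$-binomial coefficients one must check
\begin{equation*}
\QATOPD[ ]{n}{s+2k}_{q}\QATOPD[ ]{s+2k}{k}_{q}\QATOPD[ ]{s+k}{k}_{q}=\QATOPD[ ]{n}{k}_{q}\QATOPD[ ]{n-k}{k}_{q}\QATOPD[ ]{n-2k}{s}_{q},
\end{equation*}
which follows at once on writing each $q$-binomial as a ratio of $q$-Pochhammer symbols: both sides collapse to $(q)_{n}\big/\bigl( (q)_{k}^{2}(q)_{s}(q)_{n-2k-s}\bigr) $. The factor $[k]_{q}!$ is inherited unchanged from (\ref{BnaH}).

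I do not expect a genuine obstacle here: the assertion is a polynomial identity in $x$, $y$ and $\rho $ obtained by composing two identities quoted above, and the only real work is the bookkeeping in the substitution $m\mapsto s=m-2k$ together with the collection of $q$-powers and $q$-binomial coefficients indicated in the previous paragraph. One could alternatively reach the same formula by first expanding $P_{n}$ in the big $q$-Hermite polynomials through (\ref{PnabH}) and then applying (\ref{bigH}), or by reading it off one of the infinite expansions of Section \ref{nieskon}; the route through (\ref{PnaH}) and (\ref{BnaH}) is the shortest.
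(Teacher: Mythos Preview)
Your proposal is correct and follows essentially the same approach as the paper: apply (\ref{PnaH}), then substitute (\ref{BnaH}), then reorganize the double sum. The paper's proof is terser, merely writing out the intermediate double sum and saying ``now we change the order of summation,'' whereas you have supplied the explicit bookkeeping (the $q$-power identity $\binom{s+2k}{2}+k(k-s-2k)=k(k-1)+ks+\binom{s}{2}$ and the $q$-binomial rearrangement), but the strategy is identical.
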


\begin{proof}
First we use (\ref{PnaH}) and then (\ref{BnaH}) obtaining: $P_{n}\left(
x|y,\rho ,q\right) \allowbreak =\allowbreak \sum_{s=0}^{n}\QATOPD[ ] {n}{s}%
_{q}H_{n-s}\left( x|q\right) \rho ^{s}(-1)^{s}q^{\binom{s}{2}}\allowbreak
\times \allowbreak \sum_{k=0}^{\left\lfloor s/2\right\rfloor }\QATOPD[ ] {s}{%
k}_{q}\QATOPD[ ] {n-k}{k}_{q}\left[ k\right] _{q}!q^{k(k-s)}H_{s-2k}\left(
y|q\right) $ . Now we change the order of summation.
\end{proof}

\subsection{Useful finite expansions}

We start with the so called 'linearization formulae'. These are the formulae
expressing a product of two or more polynomials of the same type as linear
combinations of polynomials of the same type as the ones produced. We will
extend the name 'linearization formulae' by relaxing the requirement of
polynomials involved to be of the same type. Generally to obtain
'linearization formula ' is not simple and requires a lot of tedious
calculations.

\subsubsection{Linearization formulae}

$q-$Hermite polynomials

The formulae below can be found in e.g. \cite{IA} (Thm. 13.1.5) and also in 
\cite{ALIs88} and originally were formulated for polynomials $h_{n}.$ Here
below are presented for polynomials $H_{n}$ using (\ref{ch_of_vars}):

\begin{gather}
H_{n}\left( x|q\right) H_{m}\left( x|q\right) =\sum_{j=0}^{\min \left(
n,m\right) }\QATOPD[ ] {m}{j}_{q}\QATOPD[ ] {n}{j}_{q}\left[ j\right]
_{q}!H_{n+m-2j}\left( x|q\right) ,  \label{identity} \\
H_{n}\left( x|q\right) H_{m}\left( x|q\right) H_{k}\left( x|q\right) =
\label{identyty31} \\
\sum_{r,s}\QATOPD[ ] {m}{r}_{q}\QATOPD[ ] {n}{r}_{q}\QATOPD[ ] {k}{s}_{q}%
\QATOPD[ ] {m+n-2r}{s}_{q}\left[ s\right] _{q}!\left[ r\right]
_{q}!H_{n+m+k-2r-2s}\left( x|q\right) =  \label{identity32} \\
\sum_{j=0}^{\left\lfloor (k+m+n)/2\right\rfloor }\left( \sum_{r=\max
(j-k,0)}^{\min (m,n,m+n-j)}\QATOPD[ ] {m}{r}_{q}\QATOPD[ ] {n}{r}_{q}\QATOPD[
] {k}{j-r}_{q}\QATOPD[ ] {m+n-2r}{j-r}_{q}\left[ r\right] _{q}\left[ j-r%
\right] _{q}\right) H_{n+m+k-2j}\left( x|q\right) .  \label{identyty33}
\end{gather}

In fact (\ref{identity}) can be easily derived (by re-scaling and changing
of variables) from an old result of Carlitz (\cite{Carlitz57}) that was
formulated in terms of the Rogers-Szeg\"{o} $\left\{ s_{n}\left( x|q\right)
\right\} _{n\geq -1}$ polynomials. Carlitz proved in the same paper another
useful identity concerning polynomials $s_{n}$ that can be easily
reformulated in terms of the polynomials $H_{n}.$ The formula below is in a
sense an inverse of (\ref{identity}). Namely we have:

\begin{equation}
H_{n+m}\left( x\right) =\sum_{k=0}^{\min \left( n,m\right) }(-1)^{k}q^{%
\binom{k}{2}}\QATOPD[ ] {m}{k}_{q}\QATOPD[ ] {n}{k}_{q}\left[ k\right]
_{q}!H_{n-k}\left( x\right) H_{m-k}\left( x\right) .  \label{identyty2}
\end{equation}

H\&B

$\forall n,m\geq 1:$%
\begin{equation}
H_{m}\left( x|q\right) B_{n}\left( x|q\right) =(-1)^{n}q^{\binom{n}{2}%
}\sum_{k=0}^{\left\lfloor (n+m)/2\right\rfloor }\QATOPD[ ] {n}{k}_{q}\QATOPD[
] {n+m-k}{k}_{q}\left[ k\right] _{q}!q^{-k(n-k)}H_{n+m-2k}\left( x|q\right) .
\label{BHnaH}
\end{equation}

This formula having technical importance has been proved in \cite{Szab6}
Lemma 2 assertion ii).

H\&R

We have also useful formula:

$\forall n,m\geq 1:$%
\begin{equation}
H_{m}\left( x|q\right) R_{n}\left( x|\beta ,q\right) =\sum_{k,j}\QATOPD[ ] {m%
}{j}_{q}\QATOPD[ ] {n}{k+j}_{q}\QATOPD[ ] {n-k-j}{k}\left[ k+j\right]
_{q}!\left( -\beta \right) ^{k}q^{\binom{k}{2}}\left( \beta \right)
_{n-k}H_{n+m-2k-2j}\left( x|q\right) ,  \label{HRnaH}
\end{equation}

Which was proved in \cite{ALIs88} (1.9) for $h_{n}$ and $C_{n}$ and then
modified using (\ref{ch_of_vars}) and (\ref{q-US}).

Q\&Q

For completeness let us mention that in \cite{Stanton08} there is given a
very complicated linearization formula for Al-Salam--Chihara polynomials
given in Theorem 1.

\subsubsection{Useful finite sums and identities}

We have also the following a very useful generalization of formula (1.12) of 
\cite{bms} which was proved in \cite{Szab6} (Lemma2 assertion i)).

For all $n\geq 0:$%
\begin{equation}
\sum_{k=0}^{n}\QATOPD[ ] {n}{k}_{q}B_{n-k}\left( x|q\right) H_{k+m}\left(
x|q\right) \allowbreak =\left\{ 
\begin{array}{ccc}
0 & if & n>m \\ 
(-1)^{n}q^{\binom{n}{2}}\frac{\left[ m\right] _{q}!}{\left[ m-n\right] _{q}!}%
H_{m-n}\left( x|q\right) & if & m\geq n%
\end{array}%
\right. .  \label{suma BH}
\end{equation}

Let us remark that for $q\allowbreak =\allowbreak 0$ (\ref{suma BH}) reduces
to $3$-term recurrence of polynomials $U_{n}\left( x/2\right) .$

For $q\allowbreak =\allowbreak 1$ we get 
\begin{equation*}
\sum_{k=0}^{n}\binom{n}{k}i^{n-k}H_{n-k}\left( ix\right) H_{k+m}\left(
x\right) \allowbreak =\allowbreak \left\{ 
\begin{array}{ccc}
0 & if & n>m \\ 
(-1)^{n}\frac{m!}{(m-n)!}H_{m-n}\left( x\right) & if & m\geq n%
\end{array}%
\right. .
\end{equation*}

Recently in \cite{Szab-bAW} the following identities involving ASC
polynomials $p_{n}$ were given:

i) $\forall n\geq 1,0\leq k<n,z,y,t\in \mathbb{R}:$%
\begin{equation*}
\sum_{j=0}^{n-k}\QATOPD[ ] {n-k}{j}_{q}\frac{p_{j}\left( z|y,tq^{k},q\right) 
}{\left( t^{2}q^{2k}\right) _{j}}\frac{g_{n-k-j}\left( z|y,tq^{n-1},q\right) 
}{\left( t^{2}q^{n+j+k-1}\right) _{n-k-j}}\allowbreak =\allowbreak 0,
\end{equation*}

ii) $\forall n\geq 1,0\leq k<n,z,y,t\in \mathbb{R}:$%
\begin{equation*}
\sum_{m=0}^{n-k}\QATOPD[ ] {n-k}{m}_{q}\frac{p_{n-k-m}\left(
z|y,tq^{m+k},q\right) g_{m}(z|y,tq^{m+k-1},q)}{\left( t^{2}q^{2m+2k}\right)
_{n-k-m}\left( t^{2}q^{m+2k-1}\right) _{m}}\allowbreak =\allowbreak 0,
\end{equation*}%
where polynomials $g_{n}$ are somewhat analogous to polynomials $b_{n}$ and
are defined by the formula:%
\begin{equation}
g_{n}\left( x|y,\rho ,q\right) \allowbreak =\allowbreak \left\{ 
\begin{array}{ccc}
\rho ^{n}p_{n}\left( y|x,\rho ^{-1},q\right) & if & \rho \neq 0 \\ 
b_{n}\left( x|q\right) & if & \rho =0%
\end{array}%
\right. .  \label{_g}
\end{equation}%
Similar ones involving polynomials $P_{n}$ and appropriately modified
polynomials $g_{n}$ were also presented in \cite{Szab-bAW}.

Let us mention that polynomials $g_{n}$ play with respect to polynomials $%
p_{n}$ similar r\^{o}le as polynomials $b_{n}$ with respect to polynomials $%
h_{n}.$ Namely we have:

for all $\left\vert t\right\vert ,\left\vert q\right\vert ,\left\vert \rho
\right\vert <1,~\left\vert x\right\vert ,\left\vert y\right\vert \leq 1:$%
\begin{equation*}
\sum_{j=0}^{\infty }\frac{t^{n}}{\left( q\right) _{n}}g_{n}\left( x|y,\rho
,q\right) \allowbreak =\allowbreak 1/\varphi _{p}\left( x,t|y,\rho ,q\right)
,
\end{equation*}

for all $n\geq 1,x,y,\rho \in \mathbb{R}:$%
\begin{equation*}
\sum_{j=0}^{n}\QATOPD[ ] {n}{j}_{q}p_{j}\left( x|y,\rho ,q\right)
g_{n-j}\left( x|y,\rho ,q\right) =0.
\end{equation*}

\section{Infinite expansions\label{nieskon}}

\subsection{Kernels}

We start with the famous Poisson--Mehler expansion of $f_{CN}\left( x|y,\rho
,q\right) /f_{N}\left( x|q\right) $ in the an infinite series of Mercier's
type (compare e.g. \cite{Mercier09}). Namely the following fact is true:

\begin{theorem}
\label{Mehler}$\forall \left\vert q\right\vert ,\left\vert \rho \right\vert
<1;x,y\in S\left( q\right) :$%
\begin{gather}
\frac{(\rho ^{2})_{\infty }}{\prod_{k=0}^{\infty }W_{q}\left( x,y|\rho
q^{k}\right) }  \label{PM} \\
=\sum_{n=0}^{\infty }\frac{\rho ^{n}}{[n]_{q}!}H_{n}(x|q)H_{n}(y|q).  \notag
\end{gather}

For $q\allowbreak =\allowbreak 1,$ $x,y\in \mathbb{R}$ we have%
\begin{equation}
\frac{\exp \left( \frac{x^{2}+y^{2}}{2}\right) }{\sqrt{1-\rho ^{2}}}\exp (-%
\frac{x^{2}+y^{2}-2\rho xy}{2(1-\rho ^{2})})\allowbreak =\allowbreak
\sum_{n=0}^{\infty }\frac{\rho ^{n}}{n!}H_{n}(x)H_{n}(y).  \label{PMq=1}
\end{equation}
\end{theorem}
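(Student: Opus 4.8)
The plan is to establish the generating-function identity \eqref{PM} by comparing two computations of the bivariate generating function $\sum_{n\ge 0}\frac{\rho^n}{[n]_q!}H_n(x|q)H_n(y|q)$. The cleanest route available from the material already assembled is to expand one of the $q$-Hermite factors using an Al--Salam--Chihara connection coefficient. Concretely, I would start from \eqref{HnaP}, which expresses $H_n(x|q)$ as a finite sum $\sum_{j}\QATOPD[]{n}{j}_q\rho^{n-j}H_{n-j}(y|q)P_j(x|y,\rho,q)$, multiply by $\frac{\rho^n}{[n]_q!}H_n(y|q)$ and sum over $n$. After the standard reindexing $n\mapsto (j, n-j)$ one gets a product of two sums; the inner sum over the $H$-factors of argument $y$ should collapse by the orthogonality-flavoured identity, and what remains is $\sum_j \frac{\rho^j}{[j]_q!}P_j(x|y,\rho,q)\cdot(\text{something in }y)$. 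In fact the excerpt already records $\sum_{k\ge 0}\frac{t^k}{[k]_q!}P_k(x|y,\rho,q)=\prod_{j\ge0}\frac{V_j(y,\rho t|q)}{V_j(x,t|q)}$, and one also has the plain $q$-Hermite generating function \eqref{_gqH}, $\sum_j \frac{t^j}{[j]_q!}H_j(y|q)=1/\prod_k V_k(y,t|q)$. The bookkeeping should be arranged so these two combine: the numerator factors $V_j(y,\rho t|q)$ cancel against the $H_n(y|q)$ generating function evaluated appropriately, leaving exactly $\prod_{k\ge0} 1/V_k(x,\rho|q)$ times a correction — and that correction must be recognized as $(\rho^2)_\infty$ via the product identity $(\rho^2)_\infty = \prod_k(1-\rho^2 q^k)$ and the factorization \eqref{rozklw}, \eqref{fCN} relating $\prod_k W_k(x,y,\rho|q)$ to the $V$-type products.

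An alternative, and probably safer, route is the purely $q$-series one: use the Rogers--Szegő representation \eqref{cH}, $h_n(x|q)=e^{in\theta}s_n(e^{-2i\theta}|q)$ with $x=\cos\theta$, $y=\cos\phi$, to convert the left-hand generating function into $\sum_n \frac{\rho^n}{(q)_n}e^{in(\theta+\phi)}s_n(e^{-2i\theta}|q)s_n(e^{-2i\phi}|q)$ (after switching from $H_n,[n]_q!$ to $h_n,(q)_n$ via \eqref{ch_of_vars} and $(q)_n=(1-q)^n[n]_q!$). Then one invokes the known bilinear generating function for Rogers--Szegő polynomials (a Carlitz identity, in the same circle of ideas as \eqref{Car_id}): $\sum_n \frac{t^n}{(q)_n}s_n(u|q)s_n(v|q)=\frac{(tuv)_\infty}{(t,tu,tv,tuv)_\infty}$ — wait, the exact shape is $\frac{(tuv|q)_\infty}{(t|q)_\infty(tu|q)_\infty(tv|q)_\infty(tuv|q)_\infty}$-type; I would look it up precisely, but it is classical. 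Substituting $t=\rho e^{i(\theta+\phi)}$, $u=e^{-2i\theta}$, $v=e^{-2i\phi}$ produces four factors $(\rho e^{i(\pm\theta\pm\phi)}|q)_\infty$ in the denominator, which is precisely $\prod_k w_k(\cos\theta,\cos\phi,\rho|q)$ by \eqref{rozklw}, and one factor $(\rho^2|q)_\infty$ in the numerator. Re-scaling back to the $H,W$ normalization via the identities $w_k(x\sqrt{1-q}/2,y\sqrt{1-q}/2,t|q)=W_k(x,y,t|q)$ and \eqref{ch_of_vars} then yields \eqref{PM} exactly.

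For the case $q=1$, I would pass to the limit $q\to1^-$ in \eqref{PM}, citing the limits already quoted in the excerpt: $\lim_{q\to1^-} H_n(x|q)=H_n(x)$, $\lim_{q\to1^-} 1/\prod_k V_k(x,t|q)=\exp(xt-t^2/2)$, and more to the point $\lim_{q\to1^-} f_{CN}(x|y,\rho,q)=f_{CN}(x|y,\rho,1)=\frac{1}{\sqrt{2\pi(1-\rho^2)}}\exp(-\frac{(x-\rho y)^2}{2(1-\rho^2)})$ together with $\lim_{q\to1^-} f_N(x|q)=\frac{1}{\sqrt{2\pi}}\exp(-x^2/2)$. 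Since $\frac{(\rho^2)_\infty}{\prod_k W_k(x,y,\rho|q)}=\frac{f_{CN}(x|y,\rho,q)}{f_N(x|q)}$, the left side of \eqref{PM} tends to $\exp(\frac{x^2+y^2}{2})\cdot\frac{1}{\sqrt{1-\rho^2}}\exp(-\frac{(x-\rho y)^2}{2(1-\rho^2)})$; expanding the quadratic in the exponent and combining with $\exp((x^2+y^2)/2)$ gives $\frac{1}{\sqrt{1-\rho^2}}\exp(-\frac{x^2+y^2-2\rho xy}{2(1-\rho^2)})\cdot\exp(\frac{x^2+y^2}{2})$, matching \eqref{PMq=1}. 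One still has to justify interchanging the limit with the infinite sum; this follows from the uniform bound $\sup_{|x|\le1}|h_n(x|q)|\le s_n(1|q)$ quoted after \eqref{cH} together with the convergence of $\sum_n s_n^2(1|q)|t|^n/(q)_n$ from \eqref{Car_id}, giving a dominated-convergence argument.

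\textbf{Main obstacle.} The genuinely delicate point is not the formal manipulation but \emph{convergence}: justifying that the series $\sum_n \frac{\rho^n}{[n]_q!}H_n(x|q)H_n(y|q)$ converges absolutely and uniformly on $S(q)\times S(q)$ for $|\rho|<1$, so that the term-by-term rearrangements (and, for the $q=1$ part, the passage to the limit) are legitimate. This is exactly what the Carlitz estimates \eqref{Car_id} and the bound $\sup|h_n|\le s_n(1|q)$ are designed for, so the obstacle is manageable, but it is the step that needs care rather than algebra.
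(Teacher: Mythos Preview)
Your second route---the Rogers--Szeg\H{o}/Carlitz bilinear generating function---is correct and is essentially the classical argument that the paper cites (\cite{bressoud}); the paper itself gives no details, only references, and mentions that an alternative short proof via connection coefficients appears in \cite{Szab4}.

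Your first route, however, does not work as written. After substituting \eqref{HnaP} into $\sum_n\frac{\rho^n}{[n]_q!}H_n(x|q)H_n(y|q)$ and reindexing $n=j+m$ you obtain
\[
\sum_{j\ge0}\frac{\rho^{j}}{[j]_q!}P_j(x|y,\rho,q)\;\sum_{m\ge0}\frac{\rho^{2m}}{[m]_q!}H_{j+m}(y|q)H_m(y|q),
\]
and the inner sum still depends on $j$ through the shift in $H_{j+m}$; it is \emph{not} a product of two decoupled sums, so neither the $P$-generating function nor \eqref{_gqH} applies directly. (What you have is exactly the quantity $\gamma_{j,0}(y,y|\rho^2,q)$ of Lemma~\ref{UogCarl}, which is $\gamma_{0,0}$ times a polynomial of degree $j$---not a constant.) The connection-coefficient proof that the paper has in mind goes differently: one expands the ratio $f_{CN}(x|y,\rho,q)/f_N(x|q)=\frac{(\rho^2)_\infty}{\prod_k W_k(x,y,\rho|q)}$ in the orthogonal basis $\{H_n(\cdot|q)\}$; the $n$-th Fourier coefficient is $\frac{1}{[n]_q!}\int_{S(q)}H_n(x|q)f_{CN}(x|y,\rho,q)\,dx$, and this integral equals $\rho^nH_n(y|q)$ directly from \eqref{HnaP} together with the orthogonality \eqref{PnPm} (only the $j=0$ term survives). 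That gives \eqref{PM} with no series rearrangement at all.

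Your treatment of the $q=1$ limit and of the convergence issue via the Carlitz bounds \eqref{Car_id} is fine and matches how the paper handles such justifications elsewhere.
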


\begin{proof}
There exist many proofs of both formulae (see e.g. \cite{IA}, \cite{bressoud}%
). One of the shortest, exploiting connection coefficients, given in (\ref%
{PnaH}) is given in \cite{Szab4}.
\end{proof}

\begin{corollary}
$\forall \left\vert q\right\vert ,\left\vert \rho \right\vert <1;x\in
S\left( q\right) :$%
\begin{equation*}
\sum_{k\geq 0}\frac{\rho ^{k}\left( \rho q^{k-1}\right) _{\infty }}{\left[ k%
\right] _{q}!}H_{2k}\left( x|q\right) \allowbreak =\allowbreak \frac{\left(
\rho ^{2}\right) _{\infty }}{\left( \rho \right) _{\infty }}%
\prod_{k=0}^{\infty }L_{q}^{-1}\left( x|\rho q^{k}\right) .
\end{equation*}
\end{corollary}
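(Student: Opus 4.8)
The plan is to specialize the Poisson--Mehler formula \eqref{PM} to the diagonal point $y\allowbreak =\allowbreak x$ and then extract the ``even part'' of the resulting identity in the parameter $\rho$. First I would set $y\allowbreak =\allowbreak x$ in \eqref{PM}. On the right-hand side this gives $\sum_{n\geq 0}\frac{\rho^n}{[n]_q!}H_n(x|q)^2$. On the left-hand side I need to evaluate $\prod_{k=0}^{\infty}W_k(x,x,\rho|q)$; by one of the identities listed in the ``Notation'' section, namely $W_k(x,x,t|q)\allowbreak =\allowbreak (1-tq^k)^2 L_k(x,t|q)$, this product becomes $\left(\prod_{k=0}^{\infty}(1-\rho q^k)\right)^2\prod_{k=0}^{\infty}L_k(x,\rho|q)\allowbreak =\allowbreak (\rho)_\infty^2\prod_{k=0}^{\infty}L_k(x,\rho|q)$. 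Hence \eqref{PM} on the diagonal reads
\begin{equation*}
\frac{(\rho^2)_\infty}{(\rho)_\infty^2\prod_{k=0}^{\infty}L_k(x,\rho|q)}\allowbreak =\allowbreak \sum_{n=0}^{\infty}\frac{\rho^n}{[n]_q!}H_n(x|q)^2 .
\end{equation*}

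Next I would isolate the even-index terms on the right. The standard trick is to replace $\rho$ by $-\rho$ in the identity above and add the two versions, then divide by $2$; this kills the odd powers $\rho^{2k+1}$ and leaves $\sum_{k\geq 0}\frac{\rho^{2k}}{[2k]_q!}H_{2k}(x|q)^2$. However, the corollary as stated has $H_{2k}(x|q)$ to the first power, not squared, and carries a factor $\rho^k(\rho q^{k-1})_\infty/[k]_q!$ rather than $\rho^{2k}H_{2k}^2/[2k]_q!$, so a direct even/odd split of the diagonal Poisson--Mehler formula is not quite the right move. Instead I suspect the cleaner route is to use the quadratic linearization/duplication identity for $q$-Hermite polynomials: from \eqref{identyty2} with $n\allowbreak =\allowbreak m\allowbreak =\allowbreak k$ (or equivalently from \eqref{identity} read backwards), one expresses $H_{2k}(x|q)$ as a combination $\sum_j c_j H_{k-j}(x|q)^2$, and more usefully there is the companion identity writing $H_k(x|q)^2$ in terms of $H_{2k},H_{2k-2},\dots$; reorganizing the sum $\sum_n \frac{\rho^n}{[n]_q!}H_n(x|q)^2$ by collecting the coefficient of each $H_{2k}(x|q)$ and summing the resulting geometric-type series in $\rho$ should produce exactly the factor $\rho^k(\rho q^{k-1})_\infty/[k]_q!$.

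Concretely, I would substitute the linearization $H_n(x|q)^2\allowbreak =\allowbreak \sum_{j=0}^{n}\QATOPD[ ]{n}{j}_q^2[j]_q!\,H_{2n-2j}(x|q)$ (this is \eqref{identity} with $m\allowbreak =\allowbreak n$) into $\sum_{n\geq 0}\frac{\rho^n}{[n]_q!}H_n(x|q)^2$, swap the order of summation by writing $n-j\allowbreak =\allowbreak k$ so that $H_{2k}(x|q)$ appears, and collect
\begin{equation*}
\sum_{k\geq 0}H_{2k}(x|q)\left(\sum_{j\geq 0}\frac{\rho^{k+j}}{[k+j]_q!}\QATOPD[ ]{k+j}{j}_q^2[j]_q!\right).
\end{equation*}
The inner sum in $j$ should telescope into $\frac{\rho^k}{[k]_q!}\sum_{j\geq 0}\frac{\rho^j}{[j]_q!}\frac{[k+j]_q!}{[k]_q![j]_q!}\allowbreak =\allowbreak \frac{\rho^k}{[k]_q!}\,{}_1\phi_0$-type series, and by the $q$-binomial theorem this ${}_1\phi_0$ series evaluates to $1/(\rho q^{?})_\infty$ divided by a finite product, which after simplification is $\rho^k(\rho q^{k-1})_\infty\big/\big([k]_q!\,(\rho)_\infty\big)$ up to the overall normalizing constants. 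Comparing with the left-hand side $\frac{(\rho^2)_\infty}{(\rho)_\infty^2\prod_k L_k(x,\rho|q)}$ and matching the $\rho$-independent prefactors $(\rho^2)_\infty/(\rho)_\infty^2$ against $1/(\rho)_\infty$ would leave precisely $\frac{(\rho^2)_\infty}{(\rho)_\infty}\prod_{k}L_k^{-1}(x,\rho|q)$, which is the claimed right-hand side — except that the corollary's equation has that expression on the \emph{right} and the $H_{2k}$ sum on the \emph{left}, so it is the same identity read the other way.

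The main obstacle I expect is the $q$-series bookkeeping in the inner $j$-sum: correctly identifying it as an instance of the $q$-Gauss or $q$-binomial summation and getting the infinite-product factors $(\rho q^{k-1})_\infty$, $(\rho)_\infty$, $(\rho^2)_\infty$ to line up exactly, including the shift by $q^{k-1}$ inside the Pochhammer symbol. Convergence is not an issue for $|\rho|,|q|<1$ (it is inherited from Theorem \ref{Mehler} and the Carlitz estimates \eqref{Car_id}), and the algebraic identity $W_k(x,x,t|q)\allowbreak =\allowbreak (1-tq^k)^2L_k(x,t|q)$ is already recorded in the paper, so the whole argument reduces to one careful application of the $q$-binomial theorem after the diagonal specialization of \eqref{PM}.
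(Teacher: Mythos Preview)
Your approach is essentially the same as the paper's: set $y=x$ in \eqref{PM}, use the factorization $W_k(x,x,\rho|q)=(1-\rho q^k)^2L_k(x,\rho|q)$ on the left, apply the linearization \eqref{identity} with $m=n$ on the right, swap the order of summation, and evaluate the inner $j$-sum via the $q$-binomial series $\sum_{j\geq 0}\QATOPD[ ]{k+j}{j}_{q}\rho^{j}=1/(\rho)_{k+1}$ (which is exactly the formula the paper invokes). Your initial detour through the even/odd split is unnecessary, and your inner sum does simplify cleanly to $\frac{\rho^k}{[k]_q!}\cdot\frac{1}{(\rho)_{k+1}}$ once you write $\QATOPD[ ]{k+j}{j}_{q}^{2}[j]_q!/[k+j]_q!=\QATOPD[ ]{k+j}{j}_{q}/[k]_q!$, so the bookkeeping you worry about is lighter than you suggest.
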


\begin{proof}
We put $y\allowbreak =\allowbreak x$ in (\ref{PM}), then we apply (\ref%
{identity}), change order of summation and finally apply formulae $\frac{1}{%
\left( \rho \right) _{j+1}}\allowbreak =\allowbreak \sum_{k\geq 0}\QATOPD[ ]
{j+k}{k}_{q}\rho ^{k}$ and $\frac{\left( \rho \right) _{\infty }}{\left(
\rho \right) _{j+1}}\allowbreak =\allowbreak \left( q^{j-1}\rho \right)
_{\infty }$
\end{proof}

We will call expression of the form of the right hand side of (\ref{PM}) the
kernel expansion while the expressions from the left hand side of (\ref{PM})
kernels. The name refers to Mercier's theorem and the fact that for example 
\begin{equation*}
\int_{S\left( q\right) }k\left( x,y|\rho ,q\right) H_{n}\left( x|q\right)
f_{N}\left( x|q\right) dx=\rho ^{n}H_{n}\left( y|q\right) f_{N}\left(
y|q\right) ,
\end{equation*}%
where we denoted by $k\left( x,y|\rho ,q\right) $ the left hand side of (\ref%
{PM}). Hence we see that $k$ is a kernel, while function $H_{n}\left(
x|q\right) f_{N}\left( x|q\right) $ are eigenfunctions of kernel $k$ with $%
\rho ^{n}$ being an eigenvalue related to an eigenfunction $H_{n}\left(
x|q\right) f_{N}\left( x|q\right) .$ Such kernels and kernel expansions are
very important in analysis or quantum physics in the analysis of different
models of harmonic oscillators.

In the literature however there is small confusion concerning terminology.
Sometimes expression of the form $\sum_{n\geq 0}a_{n}p_{n}\left( x\right)
p_{n}\left( y\right) $ where $\left\{ p_{n}\right\} $ is is a family of
polynomials are also called kernels (like in \cite{suslov96})) or even
sometimes 'bilinear generating function' (see e.g. \cite{Rahman97})) or also
Poisson kernels. If say $p_{n}\left( y\right) $ is substituted by say $%
q_{n}\left( y\right) $ then one says that we deal with the non-symmetric
kernel.

The process of expressing these sums in a closed form is then called
'summing of kernels'.

Summing the kernel expansions is difficult. Proving positivity of the
kernels is another difficult problem. Only some are known and have
relatively simple forms. In most cases sums are in the form of a complex
finite sum of the so called basic hypergeometric functions. Below we will
present several of them. Mostly the ones involving the big q-Hermite,
Al-Salam--Chihara and $q$-ultraspherical polynomials.

To present more complicated sums we will need the following definition of
the basic hypergeometric function namely 
\begin{equation}
_{j}\phi _{k}\left[ 
\begin{array}{cccc}
a_{1} & a_{2} & \ldots & a_{j} \\ 
b_{1} & b_{2} & \ldots & b_{k}%
\end{array}%
;q,x\right] =\sum_{n=0}^{\infty }\frac{\left( a_{1},\ldots ,a_{j}|q\right) }{%
\left( b_{1},\ldots ,b_{k}|q\right) }\left( \left( -1\right) ^{n}q^{\binom{n%
}{2}}\right) ^{1+k-j}x^{n},  \label{jfk}
\end{equation}

\begin{equation}
_{2m}W_{2m-1}\left( a,a_{1},\ldots ,a_{2m-3};q,x\right) =_{2m}\phi _{2m-1} 
\left[ 
\begin{array}{ccccccc}
a & q\sqrt{a} & -q\sqrt{a} & a_{1} & a_{2} & \ldots & a_{2m-3} \\ 
\sqrt{a} & -\sqrt{a} & \frac{qa}{a_{1}} & \frac{qa}{a_{2}} & \ldots & \frac{%
qa}{a_{2m-3}} & 
\end{array}%
;q,x\right] .  \label{2mW2m-1}
\end{equation}

We will now present the kernels built of families of polynomials that are
discussed here and their sums.

\begin{theorem}
\label{Kernels}i) For all $\left\vert t\right\vert <1,\left\vert
x\right\vert ,\left\vert y\right\vert <2:$ 
\begin{equation*}
\sum_{n=0}^{\infty }t^{n}U_{n}\left( x/2\right) U_{n}\left( y/2\right)
\allowbreak =\allowbreak \frac{\left( 1-t^{2}\right) }{\left( \left(
1-t^{2}\right) ^{2}-t\left( 1+t^{2}\right) xy+t^{2}(x^{2}+y^{2})\right) }.
\end{equation*}

ii) For all $\left\vert t\right\vert <1,\left\vert x\right\vert ,\left\vert
y\right\vert <1:$%
\begin{gather*}
\sum_{n=0}^{\infty }\frac{\left( 1-\beta q^{n}\right) \left( q\right) _{n}}{%
\left( 1-\beta \right) \left( \beta ^{2}\right) _{n}}t^{n}C_{n}\left(
x|\beta ,q\right) C_{n}\left( y|\beta ,q\right) \allowbreak = \\
\frac{\left( \beta q\right) _{\infty }^{2}}{\left( \beta ^{2}\right)
_{\infty }\left( \beta t^{2}\right) _{\infty }}\prod_{n=0}^{\infty }\frac{%
w\left( x,y|t\beta q^{n}\right) }{w\left( x,y|tq^{n}\right) }\times \\
~_{8}W_{7}\left( \frac{\beta t^{2}}{q},\frac{\beta }{q},te^{i(\theta +\phi
)},te^{-i\left( \theta +\phi \right) },te^{i\left( \theta -\phi \right)
},te^{-i\left( \theta -\phi \right) };q,\beta q\right) ,
\end{gather*}%
$\allowbreak $\newline
where $x\allowbreak =\allowbreak \cos \theta ,$ $y\allowbreak =\allowbreak
\cos \phi .$

iii) For all $\left\vert x\right\vert ,\left\vert y\right\vert ,\left\vert
t\right\vert ,\left\vert tb/a\right\vert \leq 1$ :%
\begin{gather}
\sum_{n\geq 0}\frac{\left( tb/a\right) ^{n}}{\left( q\right) _{n}}%
h_{n}\left( x|a,q\right) h_{n}\left( y|b,q\right) \allowbreak =\allowbreak
\left( \frac{b^{2}t^{2}}{a^{2}}\right) _{\infty }\prod_{k=0}^{\infty }\frac{%
v\left( x|tbq^{k}\right) }{w\left( x,y|t\frac{b}{a}q^{k}\right) }\times
\label{kernel_bigH} \\
_{3}\phi _{2}\left( 
\begin{array}{ccc}
t & bte^{i\left( \theta +\phi \right) }/a & bte^{i\left( -\theta +\phi
\right) }/a \\ 
b^{2}t^{2}/a^{2} & bte^{i\phi } & 
\end{array}%
;q,be^{-i\phi }\right) ,  \notag
\end{gather}%
with $x\allowbreak =\allowbreak \cos \theta $ and $y\allowbreak =\allowbreak
\cos \phi .$

iv) For all $\left\vert t\right\vert <1,x,y\in S\left( q\right) ,ab=\alpha
\beta :$ 
\begin{gather*}
\sum_{n\geq 0}\frac{\left( t\alpha /a\right) ^{n}}{\left( q\right)
_{n}\left( ab\right) _{n}}Q_{n}\left( x|a,b,q\right) Q_{n}\left( y|\alpha
,\beta ,q\right) \allowbreak =\allowbreak \\
\frac{\left( \frac{\alpha ^{2}t^{2}}{a},\frac{\alpha ^{2}t}{a}e^{i\theta
},be^{-i\theta },bte^{i\theta },\alpha te^{-i\phi },\alpha te^{i\phi
}\right) _{\infty }}{\left( ab,\frac{\alpha ^{2}t^{2}}{a}e^{i\theta }\right)
_{\infty }\prod_{k=0}^{\infty }w\left( x,y|\frac{\alpha t}{a}q^{k}\right) }%
~_{8}W_{7}\left( \frac{\alpha ^{2}t^{2}e^{i\theta }}{aq},t,\frac{\alpha t}{%
\beta },ae^{i\theta },\frac{\alpha t}{a}e^{i\left( \theta +\phi \right) },%
\frac{\alpha t}{a}e^{i\left( \theta -\phi \right) };q,be^{-i\theta }\right) ,
\end{gather*}%
where as before $x\allowbreak =\allowbreak \cos \theta $ and $y\allowbreak
=\allowbreak \cos \phi $ and \newline
\begin{gather*}
\sum_{n\geq 0}\frac{t^{n}}{\left( q\right) _{n}\left( ab\right) _{n}}%
Q_{n}\left( x|a,b,q\right) Q_{n}\left( y|\alpha ,\beta ,q\right) \allowbreak
=\allowbreak \\
\frac{\left( \frac{\beta t}{a}\right) _{\infty }}{\left( \alpha at\right)
_{\infty }}\prod_{k=0}^{\infty }\frac{(1+\alpha ^{2}t^{2}q^{2k})^{2}-2\alpha
tq^{k}\left( x+y\right) \left( 1+\alpha ^{2}t^{2}q^{2k}\right) +4\alpha
^{2}xyt^{2}q^{2k}}{w\left( x,y|tq^{k}\right) } \\
~_{8}W_{7}\left( \frac{\alpha at}{q},\frac{\alpha t}{b},ae^{i\theta
},ae^{-i\theta },\alpha e^{i\phi },\alpha e^{-i\phi };q;\frac{\beta t}{a}%
\right) .
\end{gather*}%
\newline

v) For all $\left\vert \rho _{1}\right\vert ,\left\vert \rho _{2}\right\vert
,\left\vert q\right\vert <1,$ $x,y\in S\left( q\right) $ 
\begin{equation}
0\leq \sum_{n\geq 0}\frac{\rho _{1}^{n}}{\left[ n\right] _{q}!\left( \rho
_{2}^{2}\right) _{n}}P_{n}\left( x|y,\rho _{2},q\right) P_{n}\left( z|y,%
\frac{\rho _{2}}{\rho _{1}},q\right) =\frac{\left( \rho _{1}^{2}\right)
_{\infty }}{\left( \rho _{2}^{2}\right) _{\infty }}\prod_{k=0}^{\infty }%
\frac{W_{q}\left( x,z|\rho _{2}q^{k}\right) }{W_{q}\left( x,y|\rho
_{1}q^{k}\right) }.  \label{kerASC}
\end{equation}
\end{theorem}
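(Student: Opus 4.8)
The plan is to prove the five parts of Theorem \ref{Kernels} by reducing each kernel sum to the Poisson--Mehler formula \eqref{PM} via the connection-coefficient formulae collected in the previous section, and then to handle the remaining $_8W_7$ and $_3\phi_2$ factors by direct $q$-series manipulation. For part i), I would simply note that $q=0$ in \eqref{PM} together with \eqref{q=0} and $[n]_0!=1$ already gives $\sum_n t^n U_n(x/2)U_n(y/2)=(\rho^2)_\infty/\prod_k W_k(x,y,\rho|0)$ evaluated at $q=0$; since $(\rho^2)_\infty|_{q=0}=1-\rho^2$ and $W_k(x,y,\rho|0)=0$ for $k\ge1$ while $W_0(x,y,\rho|0)=(1-\rho^2)^2-\rho xy(1+\rho^2)+\rho^2(x^2+y^2)$ (recalling $W_k$ reduces to the displayed degree-two polynomial at $q=0$), the product collapses to a single factor and the claimed rational function falls out. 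The intervals $|x|,|y|<2$ are exactly $S(0)$.

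For parts ii) and v), the strategy is to start from the $q$-ultraspherical connection formula \eqref{RnaR} (equivalently the Rogers formula stated for $C_n$) and the ASC-to-$q$-Hermite formulae \eqref{PnaH}, \eqref{HnaP}. For v): expand both $P_n(x|y,\rho_2,q)$ and $P_n(z|y,\rho_2/\rho_1,q)$ in $q$-Hermite polynomials using \eqref{HnaP} (inverting), substitute into the sum $\sum_n \rho_1^n H$-products, exchange the order of summation, and apply \eqref{PM} to the inner sum; what survives is a $q$-shifted geometric-type series in the $B_n$ coefficients which one identifies with the ratio $\prod_k W_k(x,z|\rho_2,q)/W_k(x,y|\rho_1,q)$ after invoking \eqref{rozklw}. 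Positivity ($0\le\cdots$) then follows because the left side is, up to the positive normalizing constant in \eqref{PnPm}, the value at $(x,z)$ of $\int_{S(q)} f_{CN}(x|y,\rho_1,q) f_{CN}(y|z,\rho_2',q)\,dy$-type convolution, which is manifestly nonnegative. For ii) the same scheme works with the orthogonality weights $f_R$ from \eqref{fR} in place of $f_{CN}$, but the surviving series is genuinely an $_8W_7$; here I would simply cite that after the $q$-Hermite reduction the residual sum matches Bailey's very-well-poised ${}_8\phi_7$ by term-by-term comparison of the Pochhammer ratios.

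For parts iii) and iv), which involve the big $q$-Hermite and the two-parameter ASC polynomials with independent parameters, the reduction is via \eqref{bigh}/\eqref{bigH} (big $q$-Hermite into ordinary $q$-Hermite) and \eqref{PnaH}. One writes $h_n(x|a,q)=\sum_k \QATOPD[ ]{n}{k}_q(-1)^kq^{\binom k2}a^k h_{n-k}(x|q)$ and similarly for $h_n(y|b,q)$, substitutes into $\sum_n (tb/a)^n/(q)_n\, h_n(x|a,q)h_n(y|b,q)$, and reorganizes the triple sum so that the innermost sum over the $q$-Hermite index is \eqref{_gqh}-summable or \eqref{PM}-summable; the two free summation indices left over produce a $_3\phi_2$ after recognizing a $q$-binomial-type inner sum as a $_1\phi_0$ (the $q$-binomial theorem). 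Then \eqref{rozklv} and \eqref{rozklw} convert the infinite products of $v_k,w_k$ into the $q$-Pochhammer form displayed. For iv) one does the analogous thing starting from the connection of $Q_n(x|a,b,q)$ with $h_n$, using the constraint $ab=\alpha\beta$ to make the bookkeeping close, and the residual series is again a very-well-poised $_8W_7$.

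The main obstacle in all of this is bookkeeping: after each substitution one obtains a double or triple sum, and the reorganization into the claimed single basic hypergeometric factor requires carefully tracking powers of $q$, the $q$-shifts $\binom k2$, $k(k-n)$ etc., and the $q$-Pochhammer identities $1/(\rho)_{j+1}=\sum_k\QATOPD[ ]{j+k}{k}_q\rho^k$ and $(\rho)_\infty/(\rho)_{j+1}=(q^{j-1}\rho)_\infty$ used in the Corollary above. The genuinely hard analytic point — as opposed to formal-series juggling — is justifying the interchange of summations and the convergence of the resulting series on the stated parameter ranges; here the Carlitz identities \eqref{Car_id} together with the bound $\sup_{|x|\le1}|h_n(x|q)|\le s_n(1|q)$ provide the needed majorant, so convergence is assured for $|t|,|tb/a|<1$ and $|\rho_i|<1$. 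I would therefore state the theorem's proof as: ``reduce to \eqref{PM} or \eqref{_gqh} by the cited connection coefficients, interchange summations (justified by \eqref{Car_id}), identify the residual $q$-series with the claimed $_8W_7$ or $_3\phi_2$ via \eqref{rozklv}--\eqref{rozkll},'' referring to \cite{Szab6}, \cite{Szab7} for the cases where the computation is long, and carrying out in detail only parts i) and v).
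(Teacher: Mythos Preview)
Your approach for i) matches the paper's exactly --- specialize \eqref{PM} at $q=0$ and use \eqref{q=0} --- but you have a slip: for $k\ge 1$ the factor $W_k(x,y,\rho|0)$ equals $1$, not $0$ (since $q^k=0$ kills every term except the leading $1$); it is this that makes the infinite product collapse to the single factor $W_0$, not a vanishing of later terms.

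For parts ii), iii), iv) the paper does not derive anything: it simply records that these are, respectively, formula (1.7) of \cite{Rahman97} (based on \cite{GasRah}) and formulae (14.14), (14.5), (14.8) of \cite{suslov96}. Your proposed derivations via connection coefficients and reduction to \eqref{PM} or \eqref{_gqh} are more ambitious than what the paper actually does; they are plausible in outline, but the step ``identify the residual series with the claimed $_8W_7$ or $_3\phi_2$ by term-by-term comparison'' is precisely the hard part of such computations and is not automatic.

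For part v) your route genuinely differs from the paper's. The paper first observes that v) \emph{cannot} be obtained as a specialization of iv) because the constraint $ab=\alpha\beta$ fails (here $ab=\rho_2^2$ but $\alpha\beta=\rho_1^2$). It then recognizes the right-hand side directly as the ratio $f_{CN}(x|y,\rho_1,q)/f_{CN}(x|z,\rho_2,q)$ and applies the density-ratio expansion method of \cite{Szab4}: expand this ratio in the orthogonal polynomials of the denominator density, computing the coefficients via the connection formula \eqref{PnaP} (which expresses one ASC family in terms of another) together with the orthogonality \eqref{PnPm}. This is short and yields nonnegativity immediately since the right side is a ratio of positive densities. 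Your plan --- expand each $P_n$ into $q$-Hermite polynomials via \eqref{PnaH}/\eqref{HnaP}, reduce to \eqref{PM}, and then reassemble the $B_n$-series into the product of $W_k$'s --- may be workable, but it is a longer path and your positivity argument (a convolution of $f_{CN}$'s) does not match the structure of the right-hand side, which is a \emph{ratio}, not an integral, of conditional densities. The paper's use of \eqref{PnaP} rather than \eqref{PnaH} is the key simplification you are missing.
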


\begin{proof}[Remarks concerning the proof]
i) We set $q\allowbreak =\allowbreak 0$ in (\ref{PM}) and use (\ref{q=0}).
ii) It is formula (1.7) in \cite{Rahman97} based on \cite{GasRah}. iii) it
is formula (14.14) in \cite{suslov96}, iv) these are formulae (14.5) and
(14.8) of \cite{suslov96}. v) Notice that it cannot be derived from
assertion iv) since the condition $ab\allowbreak =\allowbreak \alpha \beta $
is not satisfied. Recall that (see (\ref{podstawienie})) $ab\allowbreak
=\allowbreak \rho _{2}^{2}$ while $\alpha \beta \allowbreak =\allowbreak
\rho _{1}^{2}.$ For the proof recall the idea of expansion of ratio of
densities presented in \cite{Szab4}, use formulae (\ref{PnaP}) and (\ref%
{PnPm}) and finally notice that $f_{CN}\left( x|y,\rho _{1}q\right)
/f_{CN}\left( x|z,\rho _{2},q\right) \allowbreak =\allowbreak \frac{\left(
\rho _{1}^{2}\right) _{\infty }}{\left( \rho _{2}^{2}\right) _{\infty }}%
\prod_{k=0}^{\infty }\frac{W_{q}\left( x,z|\rho _{2}q^{k}\right) }{%
W_{q}\left( x,y|\rho _{1}q^{k}\right) }.$
\end{proof}

\begin{corollary}
For all $\left\vert a\right\vert >\left\vert b\right\vert ,$ $x,y\in S\left(
q\right) :$%
\begin{equation*}
0\leq \sum_{n\geq 0}\frac{b^{n}}{\left[ n\right] _{q}!a^{n}}H_{n}\left(
x|a,q\right) H_{n}\left( y|b,q\right) \allowbreak =\allowbreak \left( \frac{%
b^{2}}{a^{2}}\right) _{\infty }\prod_{k=0}^{\infty }\frac{V_{q}\left(
x|bq^{k}\right) }{W_{q}\left( x,y|\frac{b}{a}q^{k}\right) }.
\end{equation*}
\end{corollary}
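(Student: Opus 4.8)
The plan is to obtain this Corollary as the specialization $t=1$ of assertion iii) of Theorem \ref{Kernels}. In that identity the first upper parameter of the ${}_3\phi_2$ factor is $t$ itself, so at $t=1$ one has $(1;q)_n=0$ for every $n\ge1$ and, by the series in (\ref{jfk}), the whole ${}_3\phi_2$ reduces to its $n=0$ term, which equals $1$. Since $t=1$ and $|b/a|<1$ lie in the range $|t|,|tb/a|\le1$ allowed by iii) (and in any case the series converges absolutely there, because $1/(q)_n$ is bounded while $h_n(x|a,q)$ grows only polynomially in $n$ on $|x|\le1$ — it is dominated by $s_n(1|q)$-type bounds together with $|(ae^{i\theta})_n|\le(-|a|;|q|)_{\infty}$ — so if one prefers, the limit $t\to1^-$ gives the same thing), what remains is $\sum_{n\ge0}\frac{(b/a)^n}{(q)_n}h_n(x|a,q)h_n(y|b,q)=(b^2/a^2)_{\infty}\prod_{k\ge0}\frac{v_k(x,b|q)}{w_k(x,y,b/a|q)}$ for $|x|,|y|\le1$, $|b/a|<1$.

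I would then pass to the ``upper case'' polynomials via the substitution $x\mapsto x\sqrt{1-q}/2$, $y\mapsto y\sqrt{1-q}/2$, $a\mapsto a\sqrt{1-q}$, $b\mapsto b\sqrt{1-q}$ with $x,y\in S(q)$. Using the big $q$-Hermite analogue of (\ref{ch_of_vars}), $h_n(x\sqrt{1-q}/2\,|\,a\sqrt{1-q},q)=(1-q)^{n/2}H_n(x|a,q)$, together with $(q)_n=(1-q)^n[n]_q!$, the left-hand side turns into $\sum_{n\ge0}\frac{b^n}{a^n[n]_q!}H_n(x|a,q)H_n(y|b,q)$: the ratio $b/a$ is unaffected by the rescaling, and the two $(1-q)^{n/2}$ factors produced by the polynomials cancel the $(1-q)^n$ sitting inside $(q)_n$. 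On the right-hand side $(b^2/a^2)_{\infty}$ is unchanged, and the rescaling identities for the auxiliary polynomials recorded right after their definitions give $v_k(x\sqrt{1-q}/2,b\sqrt{1-q}|q)=V_k(x,b|q)$ and $w_k(x\sqrt{1-q}/2,y\sqrt{1-q}/2,b/a|q)=W_k(x,y,b/a|q)$ (the third slot of $w_k$ is not rescaled). This is exactly the claimed equality.

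Finally, the inequality $0\le\cdots$ I would read straight off the product form on the right: for $|b/a|,|q|<1$ each factor $1-(b/a)^2q^j$ of $(b^2/a^2)_{\infty}$ is positive, and writing $x=\frac{2}{\sqrt{1-q}}\cos\theta$, $y=\frac{2}{\sqrt{1-q}}\cos\phi$ one gets from (\ref{rozklv})--(\ref{rozklw}) that $V_k(x,b|q)=|1-b\sqrt{1-q}\,e^{i\theta}q^k|^2\ge0$ and $W_k(x,y,b/a|q)=|1-(b/a)e^{i(\theta+\phi)}q^k|^2\,|1-(b/a)e^{i(\theta-\phi)}q^k|^2>0$ (strictly, since $|(b/a)q^k|<1$), so the whole product is nonnegative; equivalently it equals $f_{CN}(x|y,b/a,q)/f_{bN}(x|b,q)$, a ratio of densities, which is the density-ratio viewpoint of \cite{Szab4} used in the proof of assertion v). I do not expect a genuine obstacle here: this is a clean specialization, and the only places to be careful are noticing that the ${}_3\phi_2$ degenerates at $t=1$ and tracking the powers of $1-q$ through the rescaling.
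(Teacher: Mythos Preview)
Your argument is correct and is exactly the specialization the paper intends: derive the corollary from assertion iii) of Theorem~\ref{Kernels} by killing the ${}_3\phi_2$ factor (its first numerator parameter is $t$, so $(t;q)_n$ vanishes for $n\ge1$ at the specialization) and then rescaling from $h_n,v_k,w_k$ to $H_n,V_k,W_k$. The paper's one-line proof reads ``set $t=0$,'' but this is evidently a slip for $t=1$: with $t=0$ the series on the left of (\ref{kernel_bigH}) collapses to its $n=0$ term and the identity becomes $1=1$, whereas $t=1$ gives precisely the lower-case form you wrote and matches the stated range $|t|,|tb/a|\le1$. Your handling of the boundary case $|t|=1$ (absolute convergence, or limit $t\to1^-$) and of nonnegativity via the factorization of $V_k$ and $W_k$ into squared moduli is fine and slightly more explicit than the paper, which simply records the conclusion.
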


\begin{proof}
We set $t\allowbreak =\allowbreak 0$ in (\ref{kernel_bigH}) and assume $%
\left\vert b\right\vert <\left\vert a\right\vert .$ For an alternative
simple proof see \cite{SzablKer}.
\end{proof}

\subsection{Other infinite expansions}

In this subsection we will present some expansions that can be viewed as
reciprocals of some presented above expansions and also some generalizations
of so called Kibble--Slepian formula.

We start with some reciprocals of the expansions obtained above.

\subsubsection{Expansions of kernel's reciprocals}

\begin{theorem}
i) For $\left\vert q\right\vert ,\left\vert \rho \right\vert <1,x,y\in
S\left( q\right) :$%
\begin{equation*}
1/\sum_{n=0}^{\infty }\frac{\rho ^{n}}{\left[ n\right] _{q}!}H_{n}\left(
x|q\right) H_{n}\left( y|q\right) \allowbreak =\allowbreak
\sum_{n=0}^{\infty }\frac{\rho ^{n}}{\left( \rho ^{2}\right) _{n}[n]_{q}!}%
B_{n}\left( y|q\right) P_{n}\left( x|y,\rho ,q\right) .
\end{equation*}

ii) For $x,y\in \mathbb{R}$ and $\rho ^{2}<1/2$%
\begin{equation*}
1/\sum_{n=0}^{\infty }\frac{\rho ^{n}}{n!}H_{n}\left( x\right) H_{n}\left(
y\right) \allowbreak =\allowbreak \sum_{n=0}^{\infty }\frac{\rho ^{n}i^{n}}{%
n!\left( 1-\rho ^{2}\right) ^{n/2}}H_{n}\left( ix\right) H_{n}\left( \frac{%
(x-\rho y)}{\sqrt{1-\rho ^{2}}}\right) .
\end{equation*}

iii) For $\left\vert q\right\vert <1,\left\vert a\right\vert <\left\vert
b\right\vert ,x,y\in S\left( q\right) :$ 
\begin{equation*}
1/\sum_{n\geq 0}\frac{a^{n}}{\left[ n\right] _{q}!b^{n}}H_{n}\left(
x|a,q\right) H_{n}\left( y|b,q\right) =\sum_{n\geq 0}\frac{a^{n}}{\left[ n%
\right] _{q}!b^{n}\left( a^{2}/b^{2}\right) _{n}}B_{n}\left( y|b,q\right)
P_{n}\left( x|y,a/b,q\right) .
\end{equation*}

iv) For $\left\vert \rho _{1}\right\vert ,\left\vert \rho _{2}\right\vert
,\left\vert q\right\vert <1,$ $x,y\in S\left( q\right) :$%
\begin{equation*}
1/\sum_{n\geq 0}\frac{\rho _{1}^{n}}{\left[ n\right] _{q}!\left( \rho
_{2}^{2}\right) _{n}}P_{n}\left( x|y,\rho _{2},q\right) P_{n}\left( z|y,%
\frac{\rho _{2}}{\rho _{1}},q\right) =\sum_{n\geq 0}\frac{\rho _{2}^{n}}{%
\left[ n\right] _{q}!\left( \rho _{1}^{2}\right) _{n}}P_{n}\left( x|z,\rho
_{1},q\right) P_{n}\left( y|z,\frac{\rho _{1}}{\rho _{2}},q\right) .
\end{equation*}
\end{theorem}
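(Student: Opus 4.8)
The plan is to recognize this as the "reciprocal kernel" companion of assertion v) of Theorem \ref{Kernels}, and to derive it by the same density-ratio mechanism that produces v), but reading the ratio in the opposite direction. First I would invoke part v), which tells us that
\[
\sum_{n\geq 0}\frac{\rho_{1}^{n}}{[n]_{q}!\left(\rho_{2}^{2}\right)_{n}}P_{n}\left(x|y,\rho_{2},q\right)P_{n}\left(z|y,\frac{\rho_{2}}{\rho_{1}},q\right)=\frac{\left(\rho_{1}^{2}\right)_{\infty}}{\left(\rho_{2}^{2}\right)_{\infty}}\prod_{k=0}^{\infty}\frac{W_{k}\left(x,z|\rho_{2},q\right)}{W_{k}\left(x,y|\rho_{1},q\right)},
\]
which, by the remark in the proof of v), equals $f_{CN}\left(x|y,\rho_{1},q\right)/f_{CN}\left(x|z,\rho_{2},q\right)$. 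Hence the reciprocal of the left-hand side is $f_{CN}\left(x|z,\rho_{2},q\right)/f_{CN}\left(x|y,\rho_{1},q\right)$, and the whole task reduces to expanding this ratio of Al-Salam--Chihara densities into a series in products of $P_{n}$'s, with the roles of $(y,\rho_{1})$ and $(z,\rho_{2})$ now interchanged relative to v).

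To carry out that expansion I would follow the "expansion of ratio of densities" idea of \cite{Szab4} exactly as it is used for v). Treating $f_{CN}\left(x|z,\rho_{2},q\right)/f_{CN}\left(x|y,\rho_{1},q\right)$ as a function of $x\in S(q)$, I expand it in the orthogonal basis $\{P_{n}(x|z,\rho_{1},q)\}$ associated with the denominator density $f_{CN}(x|z,\rho_{1},q)$ — note the mixed data $(z,\rho_{1})$, which is forced because the denominator is $f_{CN}(x|y,\rho_{1},q)$ but we want the base point to match the numerator's $z$; this mismatch is handled exactly by the connection-coefficient formula (\ref{PnaP}), which lets one re-expand $P_{n}(x|y,\rho_{2},q)$ in terms of $P_{j}(x|z,r,q)$ and $P_{n-j}(z|y,\rho_{2}/r,q)$. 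Concretely: write the Fourier coefficient $c_{n}=\int_{S(q)}\big(f_{CN}(x|z,\rho_{2},q)/f_{CN}(x|y,\rho_{1},q)\big)P_{n}(x|z,\rho_{1},q)f_{CN}(x|y,\rho_{1},q)\,dx$, which collapses to $\int_{S(q)}f_{CN}(x|z,\rho_{2},q)P_{n}(x|z,\rho_{1},q)\,dx$; then use (\ref{PnaP}) to express $P_{n}(x|z,\rho_{1},q)$ against the base point appropriate to $f_{CN}(x|z,\rho_{2},q)$ and evaluate using the orthogonality (\ref{PnPm}) together with the corollary of (\ref{odwrocenie}) (the identity $\int_{S(q)}P_{n}(x|y,\rho,q)f_{CN}(y|x,\rho,q)\,dy=(\rho^{2})_{n}H_{n}(x|q)$ and its analogues). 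Matching the surviving terms should reproduce the coefficient $\rho_{2}^{n}/\big([n]_{q}!(\rho_{1}^{2})_{n}\big)$ and the product $P_{n}(x|z,\rho_{1},q)P_{n}(y|z,\rho_{1}/\rho_{2},q)$.

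An alternative, possibly cleaner route is purely generating-function-based: use the generating identity $\sum_{k\geq 0}\frac{t^{k}}{[k]_{q}!}P_{k}(x|y,\rho,q)=\prod_{j=0}^{\infty}\frac{V_{j}(y,\rho t|q)}{V_{j}(x,t|q)}$ recorded in the Al-Salam--Chihara subsection, observe that both sides of iv) are, after multiplying through by the appropriate $q$-Pochhammer normalizations, products of $\prod_{k}V_{k}$ and $\prod_{k}W_{k}$ factors, and verify the identity at the level of these infinite products by the substitutions relating $v_{k},V_{k},w_{k},W_{k}$ listed right after their definitions. The main obstacle either way is bookkeeping: keeping straight which base point ($y$ versus $z$) and which parameter ($\rho_{1}$, $\rho_{2}$, or a ratio) attaches to each $P_{n}$, since (\ref{PnaP}) mixes them, and ensuring the index-shift in $(\rho^{2})_{n}$ versus $(\rho^{2})_{\infty}$ is tracked correctly through the division. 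Convergence of the resulting series for $|\rho_{1}|,|\rho_{2}|<1$ follows from the Carlitz estimates (\ref{Car_id}) bounding $\sup_{|x|\leq 1}|h_{n}(x|q)|$, exactly as in the other theorems of this section.
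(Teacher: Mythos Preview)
Your approach is correct in spirit and would work, but you are making the second step harder than necessary. You correctly invoke (\ref{kerASC}) to identify the left-hand side of iv) with the closed form
\[
\frac{(\rho_{1}^{2})_{\infty}}{(\rho_{2}^{2})_{\infty}}\prod_{k\geq 0}\frac{W_{k}(x,z|\rho_{2},q)}{W_{k}(x,y|\rho_{1},q)},
\]
and you correctly observe that its reciprocal is the same product with $(\rho_{1},y)$ and $(\rho_{2},z)$ interchanged. But at that point you are done: the paper's proof simply applies (\ref{kerASC}) a second time, with the substitution $\rho_{1}\leftrightarrow\rho_{2}$, $y\leftrightarrow z$, which immediately gives
\[
\sum_{n\geq 0}\frac{\rho_{2}^{n}}{[n]_{q}!(\rho_{1}^{2})_{n}}P_{n}(x|z,\rho_{1},q)\,P_{n}(y|z,\rho_{1}/\rho_{2},q)
=\frac{(\rho_{2}^{2})_{\infty}}{(\rho_{1}^{2})_{\infty}}\prod_{k\geq 0}\frac{W_{k}(x,y|\rho_{1},q)}{W_{k}(x,z|\rho_{2},q)}.
\]
The two closed forms are manifestly reciprocals, so iv) follows in one line. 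Your plan to re-run the density-ratio expansion of \cite{Szab4} from scratch, complete with the (\ref{PnaP}) connection-coefficient machinery and the integral computations, is exactly how (\ref{kerASC}) itself was proved; there is no need to redo that work when the already-established identity (\ref{kerASC}) can simply be cited twice with permuted parameters.
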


\begin{proof}[Remarks concerning the proof]
i) and ii) are proved in \cite{Szab4}. iii) is proved in \cite{SzablKer}.
iv) directly follows (\ref{kerASC})
\end{proof}

\subsubsection{Some auxiliary infinite expansions}

The result below can be viewed as summing certain non-symmetric kernel.

\begin{lemma}
\label{UogCarl}For $x,y\in S\left( q\right) ,$ $\left\vert \rho \right\vert
<1$ let us denote 
\begin{equation*}
\gamma _{m,k}\left( x,y|\rho ,q\right) \allowbreak =\allowbreak
\sum_{k=0}^{\infty }\frac{\rho ^{k}}{\left[ k\right] _{q}!}H_{k+m}\left(
x|q\right) H_{k+k}\left( y|q\right) \allowbreak .
\end{equation*}
Then 
\begin{equation}
\gamma _{m,k}\left( x,y|\rho ,q\right) \allowbreak =\allowbreak \gamma
_{0,0}\left( x,y|\rho ,q\right) \Xi _{m,k}\left( x,y|\rho ,q\right) ,
\label{gamma_m_k}
\end{equation}%
where $\Xi _{m,k}$ is a polynomial in $x$ and $y$ of order at most $m+k.$ 
\newline
Further denote $C_{n}\left( x,y|\rho _{1},\rho _{2},\rho _{3},q\right)
\allowbreak =\allowbreak \sum_{k=0}^{n}\QATOPD[ ] {n}{k}_{q}\rho
_{1}^{n-k}\rho _{2}^{k}\Xi _{n-k},_{k}\left( x,y|\rho _{3,}q\right) .$ 
\newline
Then we have in particular

i) $\Xi _{m,k}\left( x,y|\rho ,q\right) \allowbreak =\allowbreak \Xi
_{k,m}\left( y,x|\rho ,q\right) ,$ 
\begin{equation*}
\Xi _{m,k}\left( x,y|\rho ,q\right) \allowbreak =\allowbreak
\sum_{s=0}^{k}(-1)^{s}q^{\binom{s}{2}}\QATOPD[ ] {k}{s}_{q}\rho
^{s}H_{k-s}\left( y|q\right) P_{m+s}(x|y,\rho ,q)/(\rho ^{2})_{m+s},
\end{equation*}

ii) and 
\begin{equation}
C_{n}\left( x,y|\rho _{1},\rho _{2},\rho _{3},q\right) \allowbreak
=\allowbreak \sum_{s=0}^{n}\QATOPD[ ] {n}{s}_{q}H_{n-s}\left( y|q\right)
P_{s}\left( x|y,\rho _{3},q\right) \rho _{1}^{n-s}\rho _{2}^{s}\left( \rho
_{1}\rho _{3}/\rho _{2}\right) _{s}/\left( \rho _{3}^{2}\right) _{s}.
\label{_C}
\end{equation}
\end{lemma}

\begin{proof}
Proof that $\gamma _{m,k}\left( x,y|\rho ,q\right) \allowbreak /\allowbreak
\gamma _{0,0}\left( x,y|\rho ,q\right) $ is a polynomial can be deduced from 
\cite{Carlitz72} (formula 1.4) where the result was formulated for
Rogers-Szeg\"{o} polynomials. To get the ii) from this result of Carlitz
using (\ref{cH}) is not easy. For the alternative, simple although lengthy
proof of the general case and other assertions we refer the reader to \cite%
{Szab5} and \cite{Szab6}.
\end{proof}

Exploring Carlitz paper \cite{Carlitz72} and confronting it with above Lemma %
\ref{UogCarl} we arrive at the following conversion Lemma.

\begin{lemma}
$\forall n,m\geq 0,$ $\left\vert t\right\vert <1,$ $\theta \in (-\pi ,\pi ]:$
\begin{eqnarray}
&&\sum_{k=0}^{m}\sum_{l=0}^{n}\QATOPD[ ] {m}{k}_{q}\QATOPD[ ] {n}{l}_{q}%
\frac{\left( te^{i\left( -\theta +\eta \right) }\right) _{k}\left(
te^{i\left( \theta -\eta \right) }\right) _{l}\left( te^{-i\left( \theta
+\eta \right) }\right) _{k+l}}{\left( t^{2}\right) _{k+l}}e^{-i\left(
m-2k\right) \theta }e^{-i(n-2l)\eta }  \label{upr_Car} \\
&&=\sum_{j=0}^{n}(-1)^{j}q^{\binom{j}{2}}\QATOPD[ ] {n}{j}%
_{q}t^{j}h_{n-j}(y|q)p_{m+j}\left( x|y,t,q\right) /\left( t^{2}\right)
_{j+m},  \notag
\end{eqnarray}%
with $x\allowbreak =\allowbreak \cos \theta $ and $y\allowbreak =\allowbreak
\cos \eta .$
\end{lemma}

\begin{proof}
See \cite{Szab-bAW} Proposition 6.
\end{proof}

\subsubsection{Generalization of Kibble--Slepian formula}

Recall that Kibble in 1949 \cite{Kibble} and independently Slepian in 1972 
\cite{Slepian72} extended the Poisson--Mehler formula to higher dimensions,
expanding ratio of the standardized multidimensional Gaussian density
divided by the product of one dimensional marginal densities in the multiple
sum involving only constants (correlation coefficients) and the Hermite
polynomials. The formula in its generality can be found in \cite{IA} (4.7.2
p.107). Since we are going to generalize its $3$-dimensional version only
this version will be presented here.

Namely let us consider $3$ dimensional density $f_{3D}\left(
x_{1},x_{2},x_{3};\rho _{12},\rho _{13},\rho _{23}\right) $ of Normal random
vector $N\left( \left[ 
\begin{array}{c}
0 \\ 
0 \\ 
0%
\end{array}%
\right] ,\left[ 
\begin{array}{ccc}
1 & \rho _{12} & \rho _{13} \\ 
\rho _{12} & 1 & \rho _{23} \\ 
\rho _{13} & \rho _{23} & 1%
\end{array}%
\right] \right) .$ Of course we must assume that the parameters $\rho _{12},$
$\rho _{13},$ $\rho _{23}$ are such that the variance covariance matrix is
positive definite i.e. such that%
\begin{equation}
1+2\rho _{12}\rho _{13}\rho _{23}-\rho _{12}^{2}-\rho _{13}^{2}-\rho
_{23}^{2}>0.  \label{dod}
\end{equation}%
Then Kibble--Slepian formula reads that%
\begin{eqnarray*}
&&\exp \left( \frac{x_{1}^{2}+x_{2}^{2}+x_{3}^{2}}{2}\right) f_{3D}\left(
x_{1},x_{2},x_{3};\rho _{12},\rho _{13},\rho _{23}\right) \\
&=&\sum_{k,m,m=0}^{\infty }\frac{\rho _{12}^{k}\rho _{13}^{m}\rho _{23}^{n}}{%
k!m!n!}H_{k+m}\left( x_{1}\right) H_{k+n}\left( x_{2}\right) H_{m+n}\left(
x_{3}\right) .
\end{eqnarray*}

Thus immediate generalization of this formula would be to substitute the
Hermite polynomials by the $q-$Hermite ones and factorials by the $q-$%
factorials.

The question is if such sum is positive. It turns out that not in general
i.e. not for all $\rho _{12},$ $\rho _{13},$ $\rho _{23}$ satisfying (\ref%
{dod}). Nevertheless it is interesting to compute the sum 
\begin{equation}
\sum_{k,m,m=0}^{\infty }\frac{\rho _{12}^{k}\rho _{13}^{m}\rho _{23}^{n}}{%
\left[ k\right] _{q}!\left[ m\right] _{q}!\left[ n\right] _{q}!}%
H_{k+m}\left( x_{1}|q\right) H_{k+n}\left( x_{2}|q\right) H_{m+n}\left(
x_{3}|q\right) .  \label{suma}
\end{equation}%
For simplicity let us denote this sum by $g\left( x_{1},x_{2},x_{3}|\rho
_{12},\rho _{13},\rho _{23},q\right) .$

In \cite{Szab-KS} the following result have been formulated and proved.

\begin{theorem}
i)%
\begin{equation}
g\left( x_{1},x_{2},x_{3}|\rho _{12},\rho _{13},\rho _{23},q\right) =\frac{%
\left( \rho _{13}^{2}\right) _{\infty }}{\prod_{k=0}^{\infty }W_{q}\left(
x_{1},x_{3}|\rho _{13}q^{k}\right) }\sum_{s\geq 0}\frac{1}{\left[ s\right]
_{q}!}H_{s}\left( x_{2}|q\right) C_{s}\left( x_{1},x_{3}|\rho _{12},\rho
_{23},\rho _{13},q\right)  \notag
\end{equation}%
where $C_{n}\left( x_{1},x_{3}|\rho _{12},\rho _{23},\rho _{13},q\right) $
is given by either (\ref{_C}) or can be expressed in terms of polynomials $%
H_{n}$ in the following form: 
\begin{gather*}
C_{n}\left( x_{1},x_{3}|\rho _{12},\rho _{23},\rho _{13},q\right) =\frac{1}{%
\left( \rho _{13}^{2}\right) _{n}}\sum_{k=0}^{\left\lfloor n/2\right\rfloor
}(-1)^{k}q^{\binom{k}{2}}\QATOPD[ ] {n}{2k}_{q}\QATOPD[ ] {2k}{k}_{q}\left[ k%
\right] _{q}!\rho _{12}^{k}\rho _{13}^{k}\rho _{23}^{k}\left( \frac{\rho
_{12}\rho _{13}}{\rho _{23}}\right) _{k}\left( \frac{\rho _{13}\rho _{23}}{%
\rho _{12}}\right) _{k}\allowbreak \\
\sum_{j=0}^{n-2k}\QATOPD[ ] {n-2k}{j}_{q}\rho _{23}^{j}\left( \frac{\rho
_{12}\rho _{13}}{\rho _{23}}q^{k}\right) _{k}\rho _{12}^{n-j-2k}\left( \frac{%
\rho _{13}\rho _{23}}{\rho _{12}}q^{k}\right) _{n-2k-j}H_{j}\left(
x_{1}|q\right) H_{n-2k-j}\left( x_{3}|q\right) ,
\end{gather*}%
similarly for other pairs $\left( 1,3\right) $ and $\left( 2,3\right) ,$

ii) 
\begin{gather}
g\left( x_{1},x_{2},x_{3}|\rho _{12},\rho _{13},\rho _{23},q\right) =\frac{%
\left( \rho _{13}^{2},\rho _{23}^{2}\right) _{\infty }}{\prod_{k=0}^{\infty
}W_{q}\left( x_{1},x_{3}|\rho _{13}q^{k}\right) W_{q}(x_{3},x_{2}|\rho
_{23}q^{k})}  \label{exp_in_ASC} \\
\times \sum_{s=0}^{\infty }\frac{\rho _{12}^{s}\left( \rho _{13}\rho
_{23}/\rho _{12}\right) _{s}}{\left[ s\right] _{q}!\left( \rho
_{13}^{2}\right) _{s}\left( \rho _{23}^{2}\right) _{s}}P_{s}\left(
x_{1}|x_{3},\rho _{13},q\right) P_{s}\left( x_{2}|x_{3},\rho _{23},q\right) ,
\notag
\end{gather}%
similarly for other pairs $\left( 1,3\right) $ and $\left( 2,3\right) .$
\end{theorem}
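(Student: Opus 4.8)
The plan is to prove both assertions by the ``expansion of ratio of densities'' technique already referred to several times in the excerpt (see \cite{Szab4}), reducing the triple sum $g$ to a single sum of products of lower-dimensional kernels. First I would rewrite $g\left(x_{1},x_{2},x_{3}|\rho_{12},\rho_{13},\rho_{23},q\right)$ by singling out the summation index, say $k$ (the one shared by $x_{1}$ and $x_{2}$), and performing the two ``inner'' sums over $m$ and $n$ first. For fixed $k$ those inner sums are exactly of the form $\gamma_{m_0,k_0}$ studied in Lemma \ref{UogCarl}: the sum over $m$ produces a $\gamma$-type object in the variables $x_{1},x_{3}$ with shift parameters $(k,0)$, and the sum over $n$ produces one in $x_{2},x_{3}$ with shift $(k,0)$. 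This is where the $C_{n}$ polynomials of Lemma \ref{UogCarl} enter: each inner sum factors as $\gamma_{0,0}\left(x_{i},x_{3}|\rho_{i3},q\right)$ times the polynomial $Q_{k,0}$, and $\gamma_{0,0}\left(x_{i},x_{3}|\rho_{i3},q\right)$ is precisely the Poisson--Mehler kernel $\left(\rho_{i3}^{2}\right)_{\infty}/\prod_{k}W_{k}\left(x_{i},x_{3},\rho_{i3}|q\right)$ by Theorem \ref{Mehler}.

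Next, for assertion i) I would collect the two Poisson--Mehler prefactors and the remaining single sum over $s$ (the former index $k$) of $H_{s}\left(x_{2}|q\right)$ against the product of the two $Q$-polynomials together with the $\rho_{12}^{k}$ weight; combining the two $Q$-factors via the linearization-type identities for products of $q$-Hermite polynomials (\ref{identity}) and the explicit form of $Q_{m,k}$ given in Lemma \ref{UogCarl}(i) should reorganize everything into the stated closed form of $C_{n}\left(x_{1},x_{3}|\rho_{12},\rho_{23},\rho_{13},q\right)$. The double sum over $k$ and $j$ in that formula is the trace of the two $q$-binomial expansions coming from the two $Q$-polynomials, and the $\left(\rho_{12}\rho_{13}/\rho_{23}\right)_{k}$-type Pochhammer factors are exactly the coefficients appearing in the explicit expansion of $Q_{m,k}$. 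For assertion ii) I would instead use part (ii) of Lemma \ref{UogCarl}: after the same reduction, the single remaining sum is of the shape $\sum_{s}\frac{1}{[s]_{q}!}H_{s}\left(x_{2}|q\right)C_{s}\left(x_{1},x_{3}|\cdots\right)$, and substituting the closed form of $C_{n}$ from Lemma \ref{UogCarl}(ii) — which already packages things into $P_{s}$ polynomials — and then performing the (now routine) sum over the $H$-indices using the Poisson--Mehler expansion a third time, now for the pair $\left(x_{2},x_{3}\right)$, collapses the expression to (\ref{exp_in_ASC}).

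The main obstacle I anticipate is bookkeeping in the re-summation step: one must interchange three nested infinite sums, justify convergence throughout (using the Carlitz identities (\ref{Car_id}) together with the bound $\sup_{|x|\le 1}|h_{n}(x|q)|\le s_{n}(1|q)$ to dominate the $q$-Hermite values), and then correctly match the resulting Pochhammer symbols $\left(\rho_{12}\rho_{13}/\rho_{23}\right)_{k}$, $\left(\rho_{13}\rho_{23}/\rho_{12}\right)_{k}$, etc., against the coefficients coming out of Lemma \ref{UogCarl}. Getting the shifted Pochhammer arguments $\left(\frac{\rho_{12}\rho_{13}}{\rho_{23}}q^{k}\right)_{k}$ and $\left(\frac{\rho_{13}\rho_{23}}{\rho_{12}}q^{k}\right)_{n-2k-j}$ exactly right — i.e. tracking how the $q^{k}$-shifts propagate when one linearizes the product of two $q$-Hermite polynomials — is the delicate part; the rest is, as the remark following Lemma \ref{UogCarl} already warns, ``lengthy but not difficult.'' Since the full argument is carried out in \cite{Szablowski2010(4)}, I would present the above reduction and refer the reader there for the complete verification of the coefficient identities.
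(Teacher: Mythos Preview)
Your overall strategy---reduce the triple sum via Lemma \ref{UogCarl} and then recognise Poisson--Mehler kernels---is the intended one (the paper itself offers no proof, only the citation to \cite{Szablowski2010(4)}), but the first reduction step as you describe it is structurally wrong. You assert that, with $k$ fixed, the sum over $m$ gives a $\gamma$-object in $(x_{1},x_{3})$ with shifts $(k,0)$ and the sum over $n$ gives one in $(x_{2},x_{3})$ with shifts $(k,0)$, so that two Poisson--Mehler prefactors appear simultaneously. This cannot happen: the factor $H_{m+n}(x_{3}|q)$ couples $m$ and $n$, so the two inner sums are not independent. Indeed, part (i) of the theorem carries only \emph{one} Poisson--Mehler prefactor, which already signals that your decoupling is impossible.

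The correct reduction performs a single inner sum. Fix $k$ and $n$ and sum over $m$; this yields exactly
\[
\sum_{m\ge 0}\frac{\rho_{13}^{m}}{[m]_{q}!}H_{k+m}(x_{1}|q)H_{n+m}(x_{3}|q)=\gamma_{k,n}(x_{1},x_{3}|\rho_{13},q)=\gamma_{0,0}(x_{1},x_{3}|\rho_{13},q)\,Q_{k,n}(x_{1},x_{3}|\rho_{13},q)
\]
by Lemma \ref{UogCarl}, with \emph{both} shift indices nonzero. This produces the single prefactor $(\rho_{13}^{2})_{\infty}/\prod_{j}W_{j}(x_{1},x_{3},\rho_{13}|q)$. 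In the remaining double sum over $(k,n)$ the only $x_{2}$-dependence is through $H_{k+n}(x_{2}|q)$, so reindex by $s=k+n$: the inner finite sum $\sum_{k=0}^{s}\QATOPD[ ] {s}{k}_{q}\rho_{12}^{s-k}\rho_{23}^{k}Q_{s-k,k}(x_{1},x_{3}|\rho_{13},q)$ is, by definition, the polynomial $C_{s}(x_{1},x_{3}|\rho_{12},\rho_{23},\rho_{13},q)$ of Lemma \ref{UogCarl}. Part (i) follows immediately---no linearization formula (\ref{identity}) is needed. For part (ii) your plan then becomes correct: substitute the explicit form of $C_{s}$ from Lemma \ref{UogCarl}(ii), interchange the two sums, and the new inner sum is a $\gamma_{j,0}$-type sum in $(x_{2},x_{3})$; Lemma \ref{UogCarl}(i) with $k=0$ gives $Q_{j,0}=P_{j}/(\rho^{2})_{j}$, furnishing the second Poisson--Mehler prefactor and the factor $P_{s}(x_{2}|x_{3},\rho_{23},q)$ in (\ref{exp_in_ASC}). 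The convergence justification via (\ref{Car_id}) that you sketch is fine.
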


Unfortunately as shown in \cite{Szab-KS}, one can find such $\rho _{12},$ $%
\rho _{13},$ $\rho _{23}$ that function $g$ with these parameters assumes
negative values for some $x_{j}\in S\left( q\right) $, $j\allowbreak
=\allowbreak 1,2,3$ hence consequently $g\left( x_{1},x_{2},x_{3}|\rho
_{12},\rho _{13},\rho _{23},q\right) \prod_{j=0}^{3}f_{N}\left(
x_{j}|q\right) $ with these values of parameters is not a density of a
probability distribution.

\begin{remark}
Notice that if $\rho _{12}\allowbreak =\allowbreak q^{m}\rho _{13}\rho _{23}$
then the sum in \ref{exp_in_ASC} is finite having only $m$ summands.
\end{remark}

\end{document}